\documentclass[10pt,a4paper]{article}

\normalfont
\usepackage{amsmath,amsthm,amssymb,enumerate}
\usepackage{graphicx,  float, afterpage,array,multicol,eucal, url,color}

 \usepackage{amscd}
\usepackage{tikz}
\usetikzlibrary{arrows,shapes}

\font\tenmsb=msbm10 \font\sevenmsb=msbm7 \font\fivemsb=msbm5
\newfam\msbfam
\textfont\msbfam=\tenmsb \scriptfont\msbfam=\sevenmsb
\scriptscriptfont\msbfam=\fivemsb

\font\teneufm=eufm10 \font\seveneufm=eufm7 \font\fiveeufm=eufm5
\newfam\eufmfam
\textfont\eufmfam=\teneufm \scriptfont\eufmfam=\seveneufm
\scriptscriptfont\eufmfam=\fiveeufm

\def\co#1{
}

\renewcommand{\epsilon}{\varepsilon}
\renewcommand{\setminus}{\smallsetminus}
\renewcommand{\emptyset}{\varnothing}

\newtheorem{theorem}{Theorem}[section]
\newtheorem{proposition}[theorem]{Proposition}
\newtheorem{corollary}[theorem]{Corollary}
\newtheorem{lemma}[theorem]{Lemma}

\newtheorem{question}[theorem]{Question}

\newtheorem{example}[theorem]{Example}
\newtheorem{examples}[theorem]{Examples}

\newtheorem{definition}[theorem]{Definition}

\newtheorem{remark}[theorem]{Remark}

\newcommand{\pd}{\operatorname{pd}}
\newcommand{\cd}{\operatorname{cd}}
\newcommand{\vcd}{\operatorname{vcd}}


\renewcommand{\AA}{\mathcal A}
\newcommand{\NN}{\mathcal N}

\newcommand{\FF}{\mathcal F}

\newcommand{\HH}{\mathcal H}


\newcommand{\Z}{\mathbb Z}

\newcommand{\R}{\mathbb R}



\newcommand{\fpinfty}{{\FP}_{\infty}}

\newcommand{\F}{\operatorname{F}}
\newcommand{\UF}{\underline{\operatorname{F}}}
\newcommand{\FP}{\operatorname{FP}}

\newcommand{\UFP}{\underline{\operatorname{FP}}}


\newcommand{\cohom}[3]{H^{{\raise1pt\hbox{$\scriptstyle#1$}}}(#2\>\!,#3)}
\newcommand{\tatecohom}[3]%
  {\widehat H^{{\raise1pt\hbox{$\scriptstyle#1$}}}(#2\>\!,#3)}

\newcommand{\Cohom}[3]%
  {H^{{\raise1pt\hbox{$\scriptstyle#1$}}}\big(#2\>\!,#3\big)}
\newcommand{\Tatecohom}[3]%
  {\widehat H^{{\raise1pt\hbox{$\scriptstyle#1$}}}\big(#2\>\!,#3\big)}

\newcommand{\homol}[3]{H_{{\lower1pt\hbox{$\scriptstyle#1$}}}(#2\>\!,#3)}
\newcommand{\homolog}[2]{H_{{\lower1pt\hbox{$\scriptstyle#1$}}}(#2)}







\newcommand{\egamma}{\underline{\operatorname{E}}\Gamma}



\newcommand{\OHGamma}{\mathcal O_{\mathcal H}\Gamma}



\begin{document}

\title{Euler classes and Bredon cohomology for groups with restricted families of finite subgroups}

\author{Conchita Mart\'{\i}nez-P\'{e}rez\\
Instituto Universitario de Matem\'aticas y Aplicaciones,\\
Departamento de Matem\'{a}ticas,\\
Universidad de Zaragoza, \\
 50009 Zaragoza \\
 Phone: +34976761000 Ext: 3243 \\
 Fax: +34976761338 \\
 email: conmar@unizar.es \\}
       

\maketitle

\noindent{\bf keywords: }Bredon cohomology, 
classifying space for proper actions, poset of finite subgroups, Euler classes.

\noindent{\bf MSC[2010]: }20J05, 18G35, 05E25.

\begin{abstract} We explore some of the special features with respect to Bredon cohomology for groups whose finite subgroups are all either nilpotent or $p$-groups or cyclic $p$-groups. We get some results on dimensions and also a formula for the equivariant Euler class for certain groups. We consider the generalization for Bredon cohomology of the properties of being duality or Poincar\'e duality  and study their behavior under $p$-power index extensions with coefficients in a field of characteristic $p$.\end{abstract}

\section{Introduction}
\noindent Posets of subgroups  are relevant objects in finite group theory.  However, almost nothing is known about these kind of posets for arbitrary groups. A few of the known properties of the finite case translate without problems to  arbitrary groups, however most of the known results  have no obvious translation. 

Classifying spaces for proper actions $\egamma$ (see Section \ref{first} for a definition) yield a
 motivation to study the poset of non-trivial finite subgroups $\FF_1$ in an arbitrary group $\Gamma$. Originally due to its appearance in the famous Baum-Connes conjecture, these spaces have received a lot of attention recently. Bredon cohomology is a cohomology theory that can be defined using the space $\egamma$ exactly as ordinary cohomology can be defined using the ordinary classifying space $\text{E}\Gamma$. We also use the term to refer to the study of the group properties which are algebraic counterparts of  properties  of $\egamma$ (such as minimal dimension, various finiteness conditions, etc).  The connexion between $\FF_1$ and $\egamma$ was first discovered in \cite{connkoz} by Connolly and Kozniewsky: Let $\egamma_s$ be the subcomplex of $\egamma$ formed by the cells on which $\Gamma$ does not act freely. If $|\FF_1|$ is the geometric realization of $\FF_1$ and $C$ is the cone construction, then there is an equivariant map of pairs
  $$(\egamma,\egamma_s)\to (C|\FF_1|,|\FF_1|)$$ 
 which induces a non-equivariant homotopy equivalence (an algebraic version of this fact valid for arbitrary families of subgroups can be found in \cite{conch2}).

This paper  is in some sense the second part of \cite{conch2}. There we  concentrated in the case of virtually solvable groups to study certain aspect of their posets of finite subgroups and to derive consequences in Bredon cohomology. Here we adopt a different strategy and impose restrictions to the possible finite subgroups of the groups we are considering. We begin by briefly studying groups having all of their finite subgroups nilpotent and then we will restrict ourselves to the cases when all the finite subgroups are either $p$-groups or $p$ or cyclic $p$-groups where in each case $p$ denotes a fixed prime. Our aim is also to gain a better understanding of the chain complexes associated to the posets.
We get some results concerning dimensions, for example from Theorem \ref{dimensionnilpotent}, Lemma \ref{cyclic} and Example \ref{sharpbound} we get (here $R$ is an abelian coefficient ring, see Section \ref{first} for the rest of notations)
\bigskip

\noindent{\bf Theorem A:} {\sl  Let $\Gamma$ be a group having all its finite subgroups nilpotent of bounded order. Then
$${\underline\cd}_R\Gamma\leq\text{max}_{H\in\FF}\big[\pd_{R\Gamma} B(RWH)+r(WH)\big].$$
In the particular case when all the finite subgroups are
cyclic $p$-groups this yields
$$\pd_{R\Gamma} B(R\Gamma)\leq\underline{\cd}_R\Gamma\leq 1+\pd_{R\Gamma }B(R\Gamma)$$
and there are examples where either side of the inequality is sharp.}

\bigskip

In Section \ref{euler}, we consider equivariant Euler classes for groups with the same restrictions on their families of finite subgroups. The equivariant Euler class of a proper cocompact $\Gamma$-space is an invariant first defined by L\"uck in \cite{LuckL2} Definition 6.8.4. The equivariant Euler class of the classifying space for proper actions, denoted $\chi^\Gamma(\egamma)$, enables one to compute several Euler characteristics for $\Gamma$ (see \cite{conch2}). In \cite{conch2} we computed a formula for $\chi^\Gamma(\egamma)$ in the case when $\Gamma$ is elementary amenable of type $\FP_\infty$. In Theorem \ref{formulaeuler} below we prove (now, we refer to Section \ref{euler} for notation)

\bigskip

\noindent{\bf Theorem B:} {\sl  Assume that $\Gamma=K\ltimes G$ with $G$ torsion free and $K$ a finite $p$-group. 
Assume moreover that for any $G\leq S\leq\Gamma$ all those finite subgroups $H\leq\Gamma$ with $HG=S$ are $\Gamma$-conjugated. Then the coefficient of $[\Gamma/1]$ in $\chi(\egamma)^\Gamma$ is
$$\sum_{H\in\AA(K)/K}{(-1)^{\text{lg}_p|H|}p^{\Big({\text{lg}_p|H|\atop 2}\Big)}\over|N_K(H)|}\chi(C_G(H))$$
where we adopt the convention that $\Big({0\atop 2}\Big)=\Big({1\atop 2}\Big)=1$
}

\bigskip

The hypothesis of this theorem hold true for certain finite extensions of right-angled Artin groups and in Section \ref{euler} we use the formula above to compute explicitly the Euler classes in some examples.

In the last two Sections we concentrate in Bredon cohomology with coefficients in a field $F$ of prime characteristic $p$. Hamilton has shown  that if $\Gamma$ is a split extension of a torsion free group $G$ by a finite $p$-group, then $\Gamma$ is of type $\UFP_\infty$ over $F$  (see Definition \ref{bredonfpinfty}) if and only if $G$ is $\FP_\infty$ over $F$. Davis and Leary have generalized the notion of Poincar\'e duality to Bredon cohomology, the notion of ordinary duality can be generalized in the same spirit.  We will prove that the corresponding assertion to Hamilton's also holds true with respect to being \lq\lq Bredon Poincar\'e duality over $F$" but that  the same is not true for Bredon duality. In other words, our Theorem \ref{poincaredualityext} and the example of Section \ref{dualityext} yield:

\bigskip

\noindent{\bf Theorem C:} {\sl Let $\Gamma=K\ltimes G$ with $G$ torsion free and $K$ a $p$ group. Then
\begin{itemize}
\item[i)] $G$ is Poincar\'e duality over $F$ if and only if $\Gamma$ is Bredon Poincar\'e duality over $F$. In this case, $\underline{\cd}_F\Gamma=\cd_FG$.

\item[ii)] For $p=2$, there are examples where $G$ is duality over $F$ but $\Gamma$ is not Bredon duality over $F$.
\end{itemize}}
\bigskip
In particular (see Definitions \ref{bredonfpinfty} and \ref{bredonPD} below) this means that with the same notation as in the Theorem, for any $H\leq K$, the group $C_G(H)$ is Poincar\'e duality over $F$.
This generalizes Corollary 2.1 in \cite{farrelllafont} where the same is proven under the extra hypothesis that $G$ is $\delta$-hyperbolic Poincar\'e duality over $\Z$ and leads to the following

 \bigskip

\noindent{\bf Question D:}  For which coefficient rings and classes of torsion free groups $G$ is it true that any finite index extension $\Gamma$ of $G$ is:
\begin{itemize}
\item[i)]  $\UFP_\infty$ if $G$ is $\FP_\infty$?

\item[ii)] Bredon duality if $G$ is duality?

\item[iii)] Bredon Poincar\'e duality if $G$ is Poincar\'e duality?

\item[iv)] Has $\underline{\cd}\Gamma=\vcd\Gamma$?
 \end{itemize}
\bigskip

At the end of Section \ref{poincareduality} we consider some examples related to Question D. All the examples constructed in this paper are finite index extensions of either Bestvina-Brady groups or right-angled Artin groups. The main reason for that is that for these kind of groups we have a number of results available which provide us with a good understanding of their ordinary and Bredon cohomological properties (\cite{learynucinkis}, \cite{IanMuge}, \cite{charneydavis}). For the relevant definitions the reader is referred to any of those papers.

All throughout the paper, $R$ is an abelian coefficient ring and whenever we do not mention it it is assumed that coefficients are taken in $\Z$.

\section{Basics on Bredon cohomology and first results}\label{first}

\noindent Let $\HH$ be a family of subgroups of a group $\Gamma$, i.e.,  a set of subgroups which is closed under $\Gamma$-conjugation and  taking subgroups. 
A $\Gamma$-$CW$-complex $X$ is said to be a classifying space for $\Gamma$ with respect to $\HH$ if $X^H$ is contractible whenever $H\in\HH$ and empty otherwise. In this case, $X$ is denoted $\text{E}_\HH\Gamma$.
These kind of spaces are known to exist and to be unique up to $\Gamma$-homotopy equivalence (\cite{tomdieck} I 6). 

For each $H\in\HH$, the augmented chain complex of $X^H$ is an exact chain complex of $WH$-modules where $WH:=N_\Gamma(H)/H$ is the Weyl group. This means that $X$ yields a family of resolutions of the trivial module, one for each  subgroup $H\in\HH$. Consider now the category $\OHGamma$ (see \cite{mislin}). Its objects are the  transitive $\Gamma$-sets with stabilizers in $\HH$, (i.e. $\Gamma$-sets of the form $\Gamma/H$ with $H\in\HH$) and its morphisms are $\Gamma$-maps. A Bredon contramodule or just a contramodule is a functor from $\OHGamma$ to the category of $R$-modules. For example, the constant functor $\Gamma/H\mapsto R$ is called the trivial contramodule. Bredon contramodules together with natural transformations between them form a new category which has products and coproducts. Let $H,K\leq\Gamma$ and denote by  $ R[\Gamma/H,\Gamma/K]$ the free $R$-module generated by the $\Gamma$-maps from $\Gamma/H$ to $\Gamma/K$.
Via Yoneda's lemma one easily sees that  $\Gamma/H\mapsto R[\Gamma/H,\Gamma/K]$ where $K\in\HH$ is a projective Bredon contramodule.
 
From this it is easy to see that the category of contramodules has enough projectives so we may form projective resolutions and define homology and cohomology in the usual way. For example, the Bredon chain complex mapping each $\Gamma/H$ to the chain complex of  $X^H$ is a Bredon projective resolution of the trivial contramodule. As in ordinary cohomology, $\cd_{R\HH}\Gamma$ is the smallest length of a projective resolution of the trivial contramodule. In the particular case when $\HH=\FF$ is the family of the finite subgroups of $\Gamma$ we will write $\underline{\cd}_R\Gamma:=\cd_{R\FF}\Gamma$ and $\egamma:=\text{E}_\FF\Gamma$. As in the ordinary case, except for some small dimension exceptions, $\underline{\cd}_R\Gamma$ equals the minimal dimension of a model for $\egamma.$

We view $\HH$ as a poset and denote by $\HH_\bullet$ the $R\Gamma$-chain complex of the geometrical realization $|\HH|$ of $\HH$. Also, if $H\leq \Gamma$, we use $\HH_H$, or $\HH_H(\Gamma)$ is there is possible confusion about the ambient group to denote  the poset
$$\HH_H(\Gamma)=\HH_H:=\{K\in\HH\mid H<K\}.$$

By \cite{conch2} (Lemma 2.3, Remark 2.4 and Theorem 2.5),
a projective Bredon resolution of the trivial contramodule is determined by
the $WH$-posets $\HH_H$. This is the algebraic version of the homotopy equivalence mentioned in the introduction which goes back to  Connolly and Kozniewski (\cite{connkoz}).

Sometimes, we can consider the smaller poset  $\HH_{1}(WH)$:

\begin{lemma}\label{nilpotent} If $H\in\HH$ has the property that for any $H<K\in\HH$,
$H< N_K(H)$ then there is a $WH$-homotopy equivalence 
$$|{\HH}_1(WH)|\simeq_{WH}|{\HH}_H(\Gamma)|$$
\end{lemma}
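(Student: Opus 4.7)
The plan is to exhibit explicit $WH$-equivariant poset maps implementing the claimed equivalence. Set
$$\phi\colon \HH_H(\Gamma)\to \HH_1(WH),\qquad K\mapsto N_K(H)/H,$$
and
$$\psi\colon \HH_1(WH)\to \HH_H(\Gamma),\qquad \overline L\mapsto L,$$
where $L\leq N_\Gamma(H)$ denotes the preimage of $\overline L$ under the quotient $N_\Gamma(H)\to WH$. The hypothesis that $H<N_K(H)$ whenever $H<K\in\HH$ is used precisely to guarantee $\phi(K)\neq 1$, so that $\phi$ actually lands in $\HH_1(WH)$. The map $\psi$ lands in $\HH_H(\Gamma)$ because $\overline L\neq 1$ forces $H\lneq L\in\HH$, interpreting the family on $WH$ as those subgroups whose preimage in $N_\Gamma(H)$ lies in $\HH$.

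Next I would verify order-preservation and $WH$-equivariance. Order-preservation is clear from the formulas. For equivariance, observe that $H$ acts trivially on $\HH_H(\Gamma)$ (any $h\in H\subseteq K$ gives $hKh^{-1}=K$), so the conjugation action of $N_\Gamma(H)$ factors through $WH$; the identity $N_{gKg^{-1}}(H)=gN_K(H)g^{-1}$ for $g\in N_\Gamma(H)$ then shows $\phi$ is $WH$-equivariant, and equivariance of $\psi$ is immediate from the definition.

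Finally, I would compute the two compositions. For any $L\leq N_\Gamma(H)$ with $H\leq L$ one has $N_L(H)=L$, so $\phi\psi=\text{id}_{\HH_1(WH)}$; conversely $\psi\phi(K)=N_K(H)\leq K$, so $\psi\phi\leq\text{id}_{\HH_H(\Gamma)}$ pointwise in the poset order. The standard poset lemma (two order-preserving $G$-equivariant maps $f\leq g$ of $G$-posets induce $G$-equivariantly homotopic maps on geometric realizations, via the canonical simplicial homotopy through the relations $f(x)\leq g(x)$) then yields $|\psi\phi|\simeq_{WH}\text{id}$, and combined with $\phi\psi=\text{id}$ this gives the desired $WH$-homotopy equivalence. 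The only real subtlety is recognizing that the normalizer hypothesis on $H$ is exactly the condition needed to keep $\phi$ inside $\HH_1(WH)$; everything else is a formal exercise once the two maps are written down.
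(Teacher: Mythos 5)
Your proof is correct, but it takes a genuinely different route from the paper. The paper works with the single inclusion $i\colon \HH_1(WH)\to\HH_H(\Gamma)$ (your $\psi$) and never writes down an inverse: it invokes the fixed-point criterion for $G$-homotopy equivalences of $G$-posets (\cite{Ben2} Theorem 6.4.2) to reduce to showing that each $i^S$, for $H\leq S\leq N_\Gamma(H)$, is an ordinary homotopy equivalence, and then applies Quillen's fibre lemma: the fibre $i^S/K^S=\{H<T\leq N_K(H)\mid S\leq N_\Gamma(T)\}$ is contractible because it has the greatest element $N_K(H)$ (here the hypothesis $H<N_K(H)$ guarantees the fibre is nonempty, and $S\leq N_\Gamma(K)$ forces $S\leq N_\Gamma(N_K(H))$). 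You instead promote that same greatest element to a global $WH$-poset retraction $\phi\colon K\mapsto N_K(H)/H$, check $\phi\psi=\mathrm{id}$ and $\psi\phi\leq\mathrm{id}$, and conclude by the equivariant order-homotopy lemma (which is \cite{Ben2} 6.4.5, the same ingredient underlying ``conically contractible''). The essential observation -- that $N_K(H)$ is the canonical largest subgroup of $K$ visible from $WH$, and that the normalizer hypothesis makes it nontrivial -- is identical in both arguments; your packaging is shorter and avoids both the fixed-point criterion and Quillen's Lemma, while the paper's template is the one that still works in situations where no global equivariant retraction exists and one can only contract fibres. Your checks of well-definedness, order-preservation and equivariance are all accurate, so there is no gap.
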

\begin{proof} Consider the inclusion which is a $WH$-map of posets:
$$
i:{\HH}_1(WH)\to{\HH}_H(\Gamma).
$$
By \cite{Ben2} Theorem 6.4.2 we have to prove that for any $H\leq S\leq N_\Gamma(H)$ the induced map $i^S:{\HH}_1(WH)^S\to{\HH}_H(\Gamma)^S$ between the fixed points subposets yields an ordinary homotopy equivalence. 
For any $K\in{\HH}_H(\Gamma)^S$
$$\begin{aligned}
i^S/K^S:&=\{H<T\leq N_\Gamma(H)\mid T\leq K\text{ and }S\leq N_\Gamma(T)\}\\
&=\{H<T\leq N_K(H)\mid S\leq N_\Gamma(T)\}
\end{aligned}$$
But since $S\leq N_\Gamma(K)$, we have $S\leq N_\Gamma(N_K(H))$ thus $|i^S/K^S|\simeq\ast$
and by Quillen's Lemma (see for example \cite{Ben2} 6.5.2), the map $i^S$ is a homotopy equivalence.
\end{proof}

The hypothesis of Lemma \ref{nilpotent} does not always hold (see \cite{conch2} Remark 2.8), but note that it does hold if all the groups in $\HH$ happen to be nilpotent.
We denote by $\NN$ and $\AA$ respectively the posets of finite nilpotent subgroups and of finite elementary abelian subgroups of $\Gamma.$
Then $\NN_1$ and $\AA_1$ are the corresponding subposets of non-trivial subgroups.

\begin{lemma}\label{quillen}(Quillen-Thevenaz) Let $\Gamma$ be an arbitrary group. There is a $\Gamma$-homotopy equivalence
$$|\NN_1|\simeq_\Gamma|\AA_1|.$$
\end{lemma}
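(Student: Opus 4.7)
The plan is to invoke the equivariant form of Quillen's Theorem A (\cite{Ben2} Theorem 6.4.2) on the natural $\Gamma$-equivariant inclusion of posets $i\colon\AA_1\hookrightarrow\NN_1$, in the same spirit as Lemma~\ref{nilpotent}. This reduces the problem to showing that for every subgroup $S\leq\Gamma$ the induced map $i^S\colon\AA_1^S\to\NN_1^S$ on $S$-fixed subposets is a non-equivariant homotopy equivalence, which in turn reduces---via Theorem A---to analysing, for each $N\in\NN_1^S$, the comma slice $i^S/N^S=\{A\in\AA_1\colon A\leq N,\ S\leq N_\Gamma(A)\}$.

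Fix such an $N$. The central tool is the Sylow factorization $N=\prod_{p\in\pi(N)}S_p(N)$ afforded by nilpotency: each Sylow subgroup $S_p(N)$ is characteristic in $N$ and hence $S$-invariant, as is the non-trivial elementary abelian $p$-subgroup $Z_p:=\Omega_1(Z(S_p(N)))$. Any non-trivial elementary abelian subgroup of $N$ lies in exactly one Sylow factor, so the comma slice decomposes as a disjoint union
\[
  i^S/N^S\;=\;\bigsqcup_{p\in\pi(N)}\AA_p(S_p(N))^S.
\]
Within each summand Quillen's centre construction produces an $S$-equivariant conical contraction: for $A\in\AA_p(S_p(N))^S$, the product $A\cdot Z_p$ is elementary abelian (since $Z_p$ is central in $S_p(N)$) and $S$-invariant, yielding the zig-zag $A\leq A\cdot Z_p\geq Z_p$ and collapsing the summand onto the vertex $Z_p$.

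The main obstacle is that the comma slice is a disjoint union of $|\pi(N)|$ contractible pieces rather than a single contractible space, so Quillen's Theorem A does not apply directly when $|\pi(N)|>1$. To get around this I would follow the Thévenaz-style strategy of splitting both posets according to primes and reducing to Quillen's original theorem one prime at a time: one introduces the prime-enriched poset $\widehat{\NN}_1:=\{(p,N)\colon N\in\NN_1,\ p\in\pi(N)\}$ ordered by $(p,N)\leq(p',N')$ iff $p=p'$ and $N\leq N'$, which is a $\Gamma$-equivariant disjoint union $\bigsqcup_p\NN_1(p)$ with $\NN_1(p)$ the subposet of those $N$ with $p\in\pi(N)$; the analogous refinement of $\AA_1$ coincides with $\AA_1$ itself, since every elementary abelian subgroup has a unique associated prime. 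After passing to the refined inclusion the comma fibres become connected and the standard argument applies, so combined with Quillen's original $|\SS_p|\simeq_\Gamma|\AA_p|$ prime-by-prime the conclusion will follow. The delicate part of the whole argument is showing that the forgetful projection $|\widehat{\NN}_1|\to|\NN_1|$ is a $\Gamma$-homotopy equivalence, which requires a nerve-of-covering (or colimit-of-subcomplexes) argument on chains in $\NN_1$ that mix primes, and this is where I expect the heart of Thévenaz's technique to be needed.
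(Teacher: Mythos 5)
Your reduction to fixed-point subposets and then, via Quillen's fibre lemma, to the contractibility of the fibres $\AA_1(N)^S$ is exactly the paper's route, and the centre trick is the right tool. The argument goes astray where you assert that every non-trivial elementary abelian subgroup of $N$ lies in a single Sylow factor, so that the fibre splits as a disjoint union $\bigsqcup_{p\in\pi(N)}\AA_p(S_p(N))^S$. That single-prime reading of $\AA_1$ cannot be the intended one, because under it the lemma itself is false: for $\Gamma=C_6$ the poset $\NN_1=\{C_2,C_3,C_6\}$ has contractible realization (two edges meeting at $C_6$), while the single-prime $\AA_1$ would be two isolated points. Here $\AA_1$ must be read as the poset of non-trivial finite abelian subgroups of squarefree exponent, i.e.\ direct products of elementary abelian $p$-groups over the various primes. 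With that convention your ``main obstacle'' evaporates: for $P$ nilpotent the subgroup $P_0$ generated by \emph{all} prime-order elements of $Z(P)$ is non-trivial, lies in $\AA_1$, is characteristic in $P$ and hence $S$-invariant, and $A\leq AP_0\geq P_0$ is a conical contraction of the whole of $\AA_1(P)^S$ at once, since $AP_0$ is again abelian of squarefree exponent. That one sentence is the paper's entire proof.

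Your proposed repair via the prime-enriched poset $\widehat{\NN}_1$ cannot be completed: the forgetful map $|\widehat{\NN}_1|\to|\NN_1|$ is not a $\Gamma$-homotopy equivalence, nor even an ordinary one, in general. Already for $\Gamma=C_6$ the source is two disjoint edges while the target is connected, so the ``delicate part'' you defer is genuinely false rather than merely delicate. The fix is not a Th\'evenaz-style prime splitting but simply the correct identification of the poset $\AA_1$.
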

\begin{proof} This follows from the standard proof for the case of a finite group which can be found for example  in \cite{Ben2} Theorem 6.6.1 and which we sketch below. 
Let $i:\AA_1\to\NN_1$ be the inclusion. Again, we have to check that for any $S\leq\Gamma$ the induced map $i^S:\AA_1^S\to\NN_1^S$ yields an ordinary homotopy equivalence and by Quillen's Lemma it suffices to check that 
 for each $P\leq\Gamma$ nilpotent normalized by $S$, the poset $\AA_1(P)^S$ consisting of those non-trivial elementary abelian subgroups of $P$ which are also normalized by $S$ is contractible. Note that as $P$ is nilpotent, $Z(P)\neq 1$. Let $P_0\leq Z(P)$ the subgroup generated by the elements in $Z(P)$ of prime order, then $P_0$ is normal in $P$ and normalized by $S$ so  the poset $\AA_1(P)^S$ is conically contractible (\cite{Ben2} Definition 6.4.6).
\end{proof}

We have chosen to prove the full versions of Lemmas \ref{nilpotent} and \ref{quillen}, note however that in the sequel we will only need the weaker fact that in either case there is an equivariant map which is an ordinary homotopy equivalence, as this yields an equivariant quasi-isomorphism  between the chain complexes associated to the geometric realizations of each poset.

We let $\Sigma\widetilde C_\bullet$ be the result of augmenting and suspending the chain complex $C_\bullet$. A projective resolution of $C_\bullet$ is a chain complex of projectives $P_\bullet$ together with a $\Gamma$-quasi-isomorphism $P\bullet\to C_\bullet$.
And the projective dimension $\pd_{R\Gamma}C_\bullet$ is, as for single modules, the smallest length of a projective resolution of $C_\bullet$.
Using the Comparison Theorem it is easy to show that quasi-isomorphic chain complexes have the same projective dimension (see Lemma 2.1 of \cite{conch2}).


Here and throughout the paper, for any family of subgroups $\HH$ of a group $\Gamma$ we denote by $\HH/\Gamma$ a set of representatives of the $\Gamma$-conjugacy orbits in $\HH$.
Combining Theorem 2.6 in \cite{conch2}  and Lemmas \ref{nilpotent} and \ref{quillen} we immediately get 

\begin{theorem}\label{dim2} Assume that all the finite subgroups of $\Gamma$ are nilpotent of bounded order. Then
$$\underline{\cd}_R\Gamma=\max_{H\in\FF/\Gamma}\pd_{RWH}\Sigma\widetilde\AA_1(WH)_\bullet.$$
\end{theorem}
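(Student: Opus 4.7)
The plan is a straight assembly of the three ingredients cited in the paragraph preceding the theorem, so the bulk of the work is in checking that the hypotheses of Lemmas \ref{nilpotent} and \ref{quillen} apply in the form needed. First I would invoke Theorem 2.6 of \cite{conch2}, which expresses $\underline{\cd}_R\Gamma$ as $\max_{H\in\FF/\Gamma}\pd_{RWH}\Sigma\widetilde\FF_H(\Gamma)_\bullet$. This reduces the theorem to showing that, for each representative $H\in\FF/\Gamma$, the $WH$-chain complex $\Sigma\widetilde\FF_H(\Gamma)_\bullet$ has the same projective dimension over $RWH$ as $\Sigma\widetilde\AA_1(WH)_\bullet$, which by Lemma 2.1 of \cite{conch2} will follow from a $WH$-equivariant quasi-isomorphism between the two complexes; in turn such a quasi-isomorphism is produced by any $WH$-equivariant ordinary homotopy equivalence between the realizations.

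Next I would check the hypothesis of Lemma \ref{nilpotent} with $\HH=\FF$. Since every $K\in\FF_H(\Gamma)$ is a finite nilpotent group strictly containing $H$, the normalizer condition in nilpotent groups gives $H<N_K(H)$. Hence Lemma \ref{nilpotent} provides a $WH$-homotopy equivalence $|\FF_1(WH)|\simeq_{WH}|\FF_H(\Gamma)|$. To switch from $\FF_1(WH)$ to $\AA_1(WH)$ I would argue that every finite subgroup of $WH=N_\Gamma(H)/H$ has the form $K/H$ with $K$ a finite subgroup of $\Gamma$ containing $H$; such $K$ is nilpotent by hypothesis, so $K/H$ is nilpotent too. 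Therefore $\NN_1(WH)=\FF_1(WH)$, and Lemma \ref{quillen} applied inside the group $WH$ delivers a $WH$-homotopy equivalence $|\FF_1(WH)|=|\NN_1(WH)|\simeq_{WH}|\AA_1(WH)|$.

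Stringing these two equivalences together yields a chain of $WH$-equivariant ordinary homotopy equivalences $|\FF_H(\Gamma)|\simeq_{WH}|\FF_1(WH)|\simeq_{WH}|\AA_1(WH)|$, and hence a $WH$-quasi-isomorphism of the associated augmented and suspended chain complexes. Combining this with the invariance of projective dimension under quasi-isomorphism and substituting into the formula of Theorem 2.6 of \cite{conch2} gives the stated equality. There is no genuine obstacle: the only non-formal ingredient is the verification of the normalizer hypothesis of Lemma \ref{nilpotent}, which is automatic from nilpotence, and the bounded-order assumption is used implicitly in the background to ensure that Theorem 2.6 of \cite{conch2} applies and that $\underline{\cd}_R\Gamma$ is finite.
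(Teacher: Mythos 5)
Your proposal is correct and follows exactly the route the paper takes: Theorem 2.6 of \cite{conch2} to reduce to the complexes $\Sigma\widetilde\FF_H(\Gamma)_\bullet$, Lemma \ref{nilpotent} (whose normalizer hypothesis holds automatically by nilpotence) to pass to $\FF_1(WH)$, and Lemma \ref{quillen} applied inside $WH$ (where $\FF_1(WH)=\NN_1(WH)$ since finite subgroups of $WH$ lift to finite nilpotent subgroups of $\Gamma$ containing $H$) to pass to $\AA_1(WH)$, using that equivariant homotopy equivalences give equivariant quasi-isomorphisms preserving projective dimension. The paper states this combination as immediate, so your fleshed-out verification of the hypotheses is exactly what is implicitly required.
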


Recall that the $R\Gamma$-module of bounded functions (first defined in \cite{krophollertalelli}) is:
$$B(R\Gamma):=\{f:\Gamma\to R\mid f(\Gamma)\text{ is finite}\}.$$ 
One of its main features is that it is projective when restricted to any finite subgroup of $\Gamma$, so it can have finite projective dimension. In fact, its projective dimension is a group invariant such that for virtually torsion free groups $\Gamma$, $\pd_{R\Gamma} B(R\Gamma)=\vcd_R\Gamma$ (see \cite{conch} Lemma 3.9).

\begin{lemma}\label{lowerbound} Let  $A_\bullet$ an $R$-free chain complex of $R\Gamma$-modules with $\pd_{R\Gamma}A_\bullet<\infty$. Then
$$\pd_{R\Gamma} A_\bullet\leq\rm{length}A_\bullet+\pd_{R\Gamma} B(R\Gamma)$$
\end{lemma}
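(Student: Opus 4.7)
The approach is to exploit the canonical short exact sequence
$$0 \to R \to B(R\Gamma) \to \bar B(R\Gamma) \to 0,$$
where $\bar B := B(R\Gamma)/R$ is $R$-free (being $R$-split off from $B(R\Gamma)$ via evaluation at any point of $\Gamma$). Tensoring levelwise over $R$ with the $R$-free chain complex $A_\bullet$ (exact since each $A_i$ is $R$-flat) yields a short exact sequence of chain complexes of $R\Gamma$-modules
$$0 \to A_\bullet \to B(R\Gamma) \otimes_R A_\bullet \to \bar B(R\Gamma) \otimes_R A_\bullet \to 0,$$
in which $\Gamma$ acts diagonally on the tensor products. Write $d = \pd_{R\Gamma} B(R\Gamma)$ and $n = \operatorname{length}(A_\bullet)$.

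First I would establish the single-module case ($n=0$): if $M$ is an $R$-free $R\Gamma$-module with $\pd_{R\Gamma} M < \infty$, then $\pd_{R\Gamma} M \leq d$. The key input is that $\pd_{R\Gamma}(B(R\Gamma)\otimes_R M) \leq d$: tensoring a length-$d$ projective resolution $P_\bullet \to B(R\Gamma)$ with $M$ over $R$ preserves exactness (as $M$ is $R$-flat), and each $P_j \otimes_R M$ remains $R\Gamma$-projective via the classical untwisting $R\Gamma \otimes_R V \cong R\Gamma \otimes_R V^{\mathrm{triv}}$ available for any $R$-free $V$. A dimension-shifting argument on the sequence $0 \to M \to B(R\Gamma)\otimes_R M \to \bar B(R\Gamma)\otimes_R M \to 0$, exploiting the hypothesis $\pd_{R\Gamma} M < \infty$ to terminate the iteration, then forces $\pd_{R\Gamma} M \leq d$.

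For the general chain-complex case I would induct on $n$. The inductive step uses the stupid-truncation short exact sequence
$$0 \to \sigma_{\leq n-1} A_\bullet \to A_\bullet \to A_n[n] \to 0,$$
where $\sigma_{\leq n-1} A_\bullet$ is $R$-free of length $n-1$ and $A_n[n]$ is the module $A_n$ placed in homological degree $n$. The finiteness of $\pd_{R\Gamma} A_\bullet$, combined with the long exact sequence of $\operatorname{Ext}_{R\Gamma}^{\ast}(-,L)$, propagates to finiteness of $\pd_{R\Gamma}$ for both the truncation and $A_n$. The base case then gives $\pd_{R\Gamma} A_n \leq d$, so $\pd_{R\Gamma} A_n[n] \leq n + d$; the inductive hypothesis gives $\pd_{R\Gamma} \sigma_{\leq n-1} A_\bullet \leq (n-1) + d$; and the short exact sequence finally yields $\pd_{R\Gamma} A_\bullet \leq \max(\pd_{R\Gamma}\sigma_{\leq n-1}A_\bullet,\,\pd_{R\Gamma}A_n[n]) \leq n + d$.

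The main obstacle is the single-module base case. The naive dimension-shift on $0 \to M \to B(R\Gamma)\otimes M \to \bar B(R\Gamma)\otimes M \to 0$ only produces modules $M_k = \bar B(R\Gamma)^{\otimes k}\otimes_R M$ whose projective dimensions grow by one at each stage and does not by itself terminate; it is precisely the finiteness hypothesis on $\pd_{R\Gamma} M$ that truncates this process, and identifying the correct form of the truncation (in the spirit of Kropholler--Talelli) is the delicate step.
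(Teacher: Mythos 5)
Your reduction to the single-module case via stupid truncation contains a genuine gap: finiteness of $\pd_{R\Gamma}A_\bullet$ does \emph{not} propagate to the terms $A_n$ or to the truncations $\sigma_{\leq n-1}A_\bullet$. Take $\Gamma=C_2$, $R=\Z$ and $A_\bullet$ the acyclic complex $\Z\xrightarrow{\,\mathrm{id}\,}\Z$ of trivial modules in degrees $1$ and $0$: it is $R$-free and quasi-isomorphic to the zero complex, so its projective dimension is finite, yet $\pd_{\Z C_2}\Z=\infty$, so neither your base case (applied to $A_1$) nor your inductive hypothesis (applied to $\sigma_{\leq 0}A_\bullet$) is available. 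The long exact sequence of hyper-Ext only yields $\operatorname{Ext}^k(\sigma_{\leq n-1}A_\bullet,L)\cong\operatorname{Ext}^{k+1}(A_n[n],L)$ for $k$ above $\pd_{R\Gamma}A_\bullet$; the two obstructions are eventually isomorphic, not zero. The way to save your idea is to tensor the \emph{whole complex} with $B(R\Gamma)$ before doing anything degreewise: each term $A_i\otimes_R B(R\Gamma)$ automatically has projective dimension at most $d$ by untwisting. This is in effect what the paper does: it first invokes the chain-complex version of Lemma 3.4 of \cite{conch} to get $\pd_{R\Gamma}A_\bullet=\pd_{R\Gamma}\big(A_\bullet\otimes_R B(R\Gamma)\big)$ (this is where the finiteness hypothesis and your sequence $0\to R\to B(R\Gamma)\to \bar B\to 0$ do their work), and then observes that for a projective resolution $P_\bullet\to B(R\Gamma)$ of length $d$, the complex $A_\bullet\otimes_R P_\bullet$ is, by the K\"unneth theorem and untwisting, a projective resolution of $A_\bullet\otimes_R B(R\Gamma)$ of length $n+d$. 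No induction on $n$ is needed.

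Your single-module base case, which you flag as unresolved, is fillable without any iteration, and the fix is worth recording since it is also the engine behind the cited lemma. Since $\bar B:=B(R\Gamma)/R$ is $R$-projective (it is an $R$-module direct summand of $B(R\Gamma)$ via evaluation at $1$), tensoring a length-$m$ projective resolution of $M$ (where $m=\pd_{R\Gamma}M<\infty$) with $\bar B$ over $R$ gives $\pd_{R\Gamma}(\bar B\otimes_R M)\leq m$; note that one tensors a resolution of $M$, not of $B(R\Gamma)$, with the third term. Hence in the long exact sequence for $0\to M\to B(R\Gamma)\otimes_R M\to\bar B\otimes_R M\to 0$ the map $\operatorname{Ext}^m_{R\Gamma}(B(R\Gamma)\otimes_R M,L)\to\operatorname{Ext}^m_{R\Gamma}(M,L)$ is surjective for every $L$, so $m\leq\pd_{R\Gamma}(B(R\Gamma)\otimes_R M)\leq d$. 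The finiteness hypothesis enters only through the existence of the top degree $m$; the modules $\bar B^{\otimes k}\otimes_R M$ never appear.
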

\begin{proof} The result is obvious if $\pd_{R\Gamma} B(R\Gamma)=\infty$, so we may assume it is finite. By the version of  \cite{conch} Lemma 3.4 for chain complexes (which can be proven analogously),
$$\pd_{R\Gamma} A_\bullet=\pd_{R\Gamma}\big[A_\bullet\otimes_R B(R\Gamma)\big].$$

Now, take a projective resolution $P_\bullet\to B(R\Gamma)_\bullet$ where $B(R\Gamma)_\bullet$ is seen as a complex concentrated in degree 0. By the K\"unneth Theorem $A_\bullet\otimes_R P_\bullet$ has the same homology as $A_\bullet\otimes_R B(\Gamma)$, therefore the induced map $A_\bullet\otimes_R P_\bullet\to A_\bullet\otimes_R B(\Gamma)$ is a projective resolution thus
$$\begin{aligned}
\pd_{R\Gamma}[A_\bullet\otimes_R B(R\Gamma)]&=\pd_{R\Gamma}(A_\bullet\otimes_RP_\bullet)\\
&\leq\text{length}(A_\bullet\otimes_R P_\bullet)=\text{length}A_\bullet+\pd_{R\Gamma} B(R\Gamma).\\
\end{aligned}$$
\end{proof}

\begin{theorem}\label{dimensionnilpotent} Assume that all the finite subgroups of $\Gamma$ are nilpotent. Then
$$\pd_{R\Gamma} B(R\Gamma)\leq\underline\cd_R\Gamma\leq\max_{H\in\FF/G}\big[\pd_{RWH}B(RWH)+r(WH)\big]$$
where $r(WH)$ is the biggest rank of a finite elementary abelian subgroup of $WH$.
\end{theorem}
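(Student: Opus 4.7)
The plan is to prove the two inequalities separately, with the lower bound being a fairly standard construction and the upper bound assembling the tools laid out in the paper so far.

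For the lower bound $\pd_{R\Gamma} B(R\Gamma)\leq\underline\cd_R\Gamma$, I would pick a $\Gamma$-CW model $X$ of $\egamma$ of minimal dimension $d=\underline\cd_R\Gamma$. Its cellular chain complex $C_\bullet(X)$ is an exact sequence of $R\Gamma$-modules augmenting to the trivial module $R$, each term being a sum of permutation modules $R[\Gamma/H]$ with $H\in\FF$. Since each $R[\Gamma/H]$ is $R$-free, the functor $-\otimes_R B(R\Gamma)$ preserves the exactness of $C_\bullet(X)\to R\to 0$. The key point is that $R[\Gamma/H]\otimes_R B(R\Gamma)\cong\ind{H}{\Gamma}{\res{\Gamma}{H} B(R\Gamma)}$, which is $R\Gamma$-projective because the restriction of $B(R\Gamma)$ to any finite subgroup is $RH$-free. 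The resulting length-$d$ $R\Gamma$-projective resolution of $B(R\Gamma)$ immediately yields $\pd_{R\Gamma}B(R\Gamma)\leq d$.

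For the upper bound, I would invoke the structural inequality underlying Theorem \ref{dim2} (which comes from Theorem 2.6 of \cite{conch2} together with Lemmas \ref{nilpotent} and \ref{quillen}, and which does not require the bounded-order hypothesis to give only an inequality rather than equality):
$$\underline\cd_R\Gamma\leq\max_{H\in\FF/\Gamma}\pd_{RWH}\Sigma\widetilde\AA_1(WH)_\bullet.$$
Concretely, all finite subgroups of $\Gamma$ are nilpotent, so Lemma \ref{nilpotent} gives a $WH$-homotopy equivalence $|\FF_H(\Gamma)|\simeq_{WH}|\FF_1(WH)|=|\NN_1(WH)|$, and Lemma \ref{quillen} applied to $WH$ further identifies this with $|\AA_1(WH)|$ up to $WH$-homotopy; the resulting $RWH$-quasi-isomorphism of suspended augmented chain complexes preserves projective dimension. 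Next, since a strict chain of nontrivial finite elementary abelian subgroups of $WH$ has length at most $r(WH)-1$, the complex $\Sigma\widetilde\AA_1(WH)_\bullet$ is concentrated in degrees $0$ through $r(WH)$, so has length at most $r(WH)$. Applying Lemma \ref{lowerbound} with $A_\bullet=\Sigma\widetilde\AA_1(WH)_\bullet$ over the group $WH$ then yields
$$\pd_{RWH}\Sigma\widetilde\AA_1(WH)_\bullet\leq r(WH)+\pd_{RWH}B(RWH),$$
and taking the maximum over $H$ completes the argument.

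The main technical obstacle is verifying that Lemma \ref{lowerbound}'s hypothesis $\pd_{RWH}A_\bullet<\infty$ is satisfied: if some $\pd_{RWH}B(RWH)$ is infinite, the right-hand side of the desired inequality is infinite and the claim is vacuous, so we may assume all such projective dimensions are finite; in that case the finiteness of $\pd_{RWH}\Sigma\widetilde\AA_1(WH)_\bullet$ follows from a term-by-term estimate on the bounded complex $\Sigma\widetilde\AA_1(WH)_\bullet$ (whose entries are permutation modules on finite-stabilizer cosets) using the fact, recalled just before Lemma \ref{lowerbound}, that $B$ detects finite projective dimension of such modules. A minor secondary point is the usual small-dimension caveat when the minimal geometric dimension of $\egamma$ exceeds $\underline\cd_R\Gamma$ by one; this is harmless because the lower-bound argument applies to the cellular complex of any model, whose length we can arrange to be $\underline\cd_R\Gamma$ at the level of projective resolutions.
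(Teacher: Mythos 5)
Your route is the paper's own: the lower bound is the standard induced-module argument (your version of it is correct, including the observation that the tensor identity $R[\Gamma/H]\otimes_R B(R\Gamma)\cong R\Gamma\otimes_{RH}B(R\Gamma)$ turns the evaluated Bredon resolution into a projective resolution of $B(R\Gamma)$), and the upper bound is obtained by feeding Theorem \ref{dim2} into Lemma \ref{lowerbound}, using that $\Sigma\widetilde\AA_1(WH)_\bullet$ has length at most $r(WH)$ because a strictly increasing chain of nontrivial elementary abelian subgroups has at most $r(WH)$ members. That length bound is exactly the point the paper leaves implicit, and you justify it correctly.

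The step that fails is your proposed verification of the hypothesis $\pd_{RWH}\Sigma\widetilde\AA_1(WH)_\bullet<\infty$ in Lemma \ref{lowerbound}. The degree-$i$ term of $\widetilde\AA_1(WH)_\bullet$ is the permutation module on chains $E_0<\dots<E_i$ of nontrivial elementary abelian subgroups, and the stabilizer of such a chain is $\bigcap_j N_{WH}(E_j)$, which contains $C_{WH}(E_i)$ and in particular $E_i$ itself; these stabilizers are neither finite nor torsion free. Hence $\pd_{RWH}R[WH/S_\sigma]=\cd_R S_\sigma=\infty$ whenever $|E_i|$ is not invertible in $R$, so no term-by-term estimate can establish finiteness: finite projective dimension of this complex is a property of its quasi-isomorphism class, not of its terms. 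The way to close the gap (which the paper also glosses over) is to note that the claim is vacuous unless every $\pd_{RWH}B(RWH)$ is finite, equivalently $\pd_{R\Gamma}B(R\Gamma)<\infty$ by \cite{krophollermislin} Lemma 7.3; under the bounded-order hypothesis (present in Theorem A and in Theorem \ref{dim2}, though omitted from the statement here) this already forces $\underline{\cd}_R\Gamma<\infty$ by Kropholler--Mislin type results, and then Theorem \ref{dim2} bounds each $\pd_{RWH}\Sigma\widetilde\AA_1(WH)_\bullet$ by $\underline{\cd}_R\Gamma<\infty$, so Lemma \ref{lowerbound} applies legitimately. Relatedly, your parenthetical appeal to ``$B$ detects finite projective dimension of such modules'' is not what is recalled before Lemma \ref{lowerbound}; what is recalled there is only that $B(R\Gamma)$ is projective on restriction to finite subgroups.
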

\begin{proof} The lower bound is well known. The upper bound follows from Theorem \ref{dim2} and Lemma \ref{lowerbound}.
\end{proof}

One more consequence of the nilpotency of the finite subgroups is the next result which will be useful below.

\begin{lemma}\label{contractible} Let $\Gamma$ be a group having all its finite subgroups nilpotent. If there are $H<K$ both finite with $|C_\Gamma(H):C_\Gamma(K)|<\infty$, then $|\FF_H|\simeq\ast$.
\end{lemma}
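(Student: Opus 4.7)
The plan is to construct an explicit conical contraction of $|\FF_H|$ onto the vertex $K$. My candidate contracting map is
\[
\phi\colon \FF_H \to \FF_H,\qquad \phi(L) := \langle L, K\rangle.
\]
Assuming for the moment that $\phi$ is well defined---that is, that $\langle L, K\rangle$ is always a finite subgroup of $\Gamma$---it automatically lies in $\FF_H$, since it contains $K$ and hence $H$ strictly, and every finite subgroup of $\Gamma$ is nilpotent by the standing hypothesis. The map $\phi$ is then order-preserving and satisfies
\[
L \leq \phi(L) \geq K \qquad \text{for every } L \in \FF_H.
\]
By the standard zig-zag argument for conical contractibility (\cite{Ben2}, Definition 6.4.6 and the surrounding discussion), this exhibits $\operatorname{id}_{|\FF_H|}$ as homotopic to the constant map at $K$, whence $|\FF_H| \simeq \ast$.

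The proof therefore reduces to the following key claim: \emph{for every finite subgroup $L \leq \Gamma$ with $H \leq L$, the subgroup $\langle L, K\rangle \leq \Gamma$ is finite.} I would establish this in three ingredients. First, $L$ is finite and nilpotent with $H \leq L$, so its normalizer tower ascends from $H$ to $L$, and the conjugation homomorphism $N_L(H) \to \operatorname{Aut}(H)$, iterated along this tower, bounds $[L : C_L(H)]$ by $|\operatorname{Aut}(H)|$. Second, the centralizer hypothesis gives $[C_L(H) : C_L(K)] \leq [C_\Gamma(H) : C_\Gamma(K)] < \infty$, so that $C_L(K)$ has index in $L$ bounded uniformly in $L$; since $C_L(K)$ centralizes $K$, the product $C_L(K)\cdot K$ is already a finite subgroup of $\Gamma$. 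Third, one must absorb the remaining $C_L(K)$-cosets of $L$ into a finite overgroup of $K$; this is done by a step-by-step argument along those cosets (in the spirit of \cite{conch2}, Lemmas 3.5 and 4.6, now adapted from the virtually solvable setting to the present one), at each step invoking the global nilpotency hypothesis on finite subgroups of $\Gamma$ to guarantee that the newly generated subgroup of $\Gamma$ is again finite and therefore nilpotent.

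The main obstacle is precisely this third step. Two finite subgroups of an arbitrary group can perfectly well generate an infinite subgroup---$\PSL_2(\Z) \cong C_2 \ast C_3$ is the archetypal example---so structural input beyond the finiteness of $L$ and $K$ is genuinely required. What does the essential work here is the assumption that \emph{every} finite subgroup of $\Gamma$ is nilpotent, not merely $L$ and $K$: this precludes any free-product-like accumulation inside $\langle L, K\rangle$ and forces the coset-by-coset absorption to terminate in a finite nilpotent overgroup, at which point the conical contraction is automatic.
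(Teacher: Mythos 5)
Your reduction to the key claim is honest, but the key claim is precisely where the proof is missing, and the argument you sketch for it does not work. In the third ingredient you propose to conclude, coset by coset, that the newly generated subgroup is finite ``by invoking the global nilpotency hypothesis on finite subgroups of $\Gamma$''; this is circular. The hypothesis that every finite subgroup of $\Gamma$ is nilpotent constrains subgroups \emph{already known} to be finite and can never be used to certify that a given subgroup is finite --- if $\langle L,K\rangle$ happened to be infinite, the hypothesis would simply say nothing about it. You correctly identify this as the main obstacle ($C_2\ast C_3$ and the like) and then assert rather than prove that nilpotency ``precludes free-product-like accumulation.'' There are also smaller slips in the first two ingredients: $[L:C_L(H)]$ is not bounded by $|\operatorname{Aut}(H)|$ (only $[N_L(H):C_{N_L(H)}(H)]$ embeds in $\operatorname{Aut}(H)$; $H$ may have many $L$-conjugates), so the claimed uniform bound on $[L:C_L(K)]$ does not follow as stated.

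The paper avoids ever having to prove your key claim, and the difference is instructive. First it uses Lemma \ref{nilpotent} to replace $\FF_H(\Gamma)$ by $\FF_1(WH)$, i.e.\ it only has to contract the poset of finite subgroups $H<P\leq N_\Gamma(H)$. Second, the cone point is not $K$ but a finite subgroup $S$ with $H<S\trianglelefteq N_\Gamma(H)$, namely the normal closure in $N_\Gamma(H)$ of $L:=N_K(H)$ (which strictly contains $H$ because $K$ is nilpotent). The centralizer hypothesis enters exactly here: $C_\Gamma(K)\leq C_\Gamma(L)\leq C_\Gamma(H)$ forces $r:=|N_\Gamma(H):N_\Gamma(L)\cap N_\Gamma(H)|<\infty$, so $L$ has only finitely many $N_\Gamma(H)$-conjugates, and the Connolly--Kozniewski (Dicman-type) rewriting $l^xt^y=t^yh^z$ shows every element of $S$ is a product of at most $r$ conjugates of elements of the finite set $L$; hence $S$ is finite. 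The contraction is then $P\mapsto PS$, and it is the \emph{normality} of $S$ in $N_\Gamma(H)$ that guarantees $PS$ is a subgroup and finite for every finite $P$ --- the structural input your join $\langle L,K\rangle$ lacks. If you want to salvage your approach, you should replace the cone point $K$ by such a finite normal subgroup $S$ and first pass to $\FF_1(WH)$; as written, the finiteness of $\langle L,K\rangle$ for arbitrary finite $L>H$ is an unproved (and at best highly nonobvious) assertion on which the whole argument rests.
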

\begin{proof} Recall that by Lemma \ref{nilpotent}, there is a $WH$-homotopy equivalence 
$$|{\FF}_1(WH)|\simeq|{\FF}_H(\Gamma)|.$$
Let $L:=N_K(H)=K\cap N_\Gamma(H)$, observe that as $K$ is nilpotent, we have $H<L$ thus $L/H\in{\FF}_1(WH).$
Obviously, $C_\Gamma(K)\leq C_\Gamma(L)\leq C_\Gamma(H)$ thus the index $|C_\Gamma(H):C_\Gamma(L)|$ is also finite.
And as each centralizer has finite index in the corresponding normalizer, this implies  that also $r:=|N_\Gamma(H):N_\Gamma(L)\cap N_\Gamma(H)|<\infty$. Now, the argument in the proof of \cite{connkoz} Corollary 6.1 shows that there is some $L\leq S\trianglelefteq N_\Gamma(H)$ with $S/H\in{\FF}_1(WH)$ thus ${\FF}_1(WH)$ is conically contractible. For the reader's convenience, we recall briefly that argument here. Let $S$ denote the set of all products of conjugates of elements of $L$ by elements in $N_\Gamma(H)$, then obviously $L\leq S\trianglelefteq N_\Gamma(H)$. We claim that $S$ is finite. To see it, it suffices to observe that whenever $l,t\in L$ and $x,y\in N_\Gamma(H)$, there are some $h\in L$ and $z\in N_\Gamma(H)$ such that $l^xt^y=t^yh^z$ thus every element of $S$ can be written as a product of at most $r$ conjugates of elements of $L$. As $L$ is finite, the claim follows.

\end{proof}

\begin{theorem}\label{condition} Let $\Gamma$ be a group with all its finite subgroups nilpotent of bounded order and with
$\underline{\cd}_R\Gamma<\infty$. Assume that there is an order reversing integer valued function $l:\FF\to\Z$ such that
for each $H\leq \Gamma$ finite $\pd_{RWH}B(WH)\leq l(H)$ and either $l(K)<l(H)$ for any $H<K\in\FF$ or there is some $H<K\in\FF$ with $N_\Gamma(K)$ and $N_\Gamma(H)$ commensurable.
Then $\underline{\cd}_R\Gamma\leq l(1).$\end{theorem}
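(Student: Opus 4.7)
The plan is to prove, by downward induction on $H\in\FF$ starting from maximal finite subgroups (which exist since all finite subgroups have bounded order), the pointwise bound $\pd_{RWH}\Sigma\widetilde\AA_1(WH)_\bullet\le l(H)$. Because $l$ is order reversing, $l(H)\le l(1)$ for every $H\in\FF$, so combining this with Theorem \ref{dim2} yields $\underline{\cd}_R\Gamma\le l(1)$. For the base case, if $H$ is maximal finite then any nontrivial finite subgroup $S/H$ of $WH$ would correspond to a finite $S>H$ in $\Gamma$, contradicting maximality; hence $WH$ is torsion free, $\AA_1(WH)=\emptyset$, $\Sigma\widetilde\AA_1(WH)_\bullet=R$ concentrated in degree zero, and $\pd_{RWH}R=\cd_RWH=\vcd_RWH=\pd_{RWH}B(RWH)\le l(H)$.

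For the inductive step, assume the bound for every $K>H$; the hypothesis splits into two cases. If there exists $K>H$ with $N_\Gamma(K)$ commensurable with $N_\Gamma(H)$, then since $C_\Gamma(L)$ has finite index in $N_\Gamma(L)$ for any finite $L$, commensurability forces $|C_\Gamma(H):C_\Gamma(K)|<\infty$; Lemma \ref{contractible} then gives $|\FF_H|\simeq\ast$, so via the $WH$-equivariant quasi-isomorphism $|\AA_1(WH)|\simeq|\FF_H|$ of Lemmas \ref{nilpotent} and \ref{quillen}, $\Sigma\widetilde\AA_1(WH)_\bullet$ is quasi-isomorphic to an acyclic complex, and $\pd_{RWH}\Sigma\widetilde\AA_1(WH)_\bullet\le l(H)$.

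Otherwise $l(K)<l(H)$ for every $K>H$, and by induction $\pd_{RWK}\Sigma\widetilde\AA_1(WK)_\bullet\le l(K)\le l(H)-1$ for every such $K$. Replacing $\Sigma\widetilde\AA_1(WH)_\bullet$ by $\Sigma\widetilde\FF_H(\Gamma)_\bullet$ via Lemmas \ref{nilpotent} and \ref{quillen}, we filter the $WH$-chain complex $C_*(|\FF_H|)$ by the order of the minimum element of each chain: let $G^j$ consist of the chains whose minimum has order at least $j$. Each $G^j$ is a $WH$-stable subcomplex because the boundary operator either removes a non-minimal vertex (preserving the minimum) or removes the minimum itself (strictly increasing the order of the new minimum, hence pushing the chain into $G^{j+1}$). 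The associated graded decomposes as
$$G^j/G^{j+1}\;\cong\;\bigoplus_{[K]\in(\FF_H)_j/WH}\IND^{WH}_{N_{WH}(K)}\Sigma\widetilde\FF_K(\Gamma)_\bullet,$$
where $N_{WH}(K)=(N_\Gamma(K)\cap N_\Gamma(H))/H$ and $(\FF_H)_j$ denotes the elements of $\FF_H$ of order $j$, since the cells of fixed minimum $K$ assemble (with the induced differential) into the cone complex on $C_*(|\FF_K|)$. By Shapiro's lemma, combined with the transfer of the inductive bound from $WK$ to $N_{WH}(K)$ along the natural homomorphism $N_{WH}(K)\to WK$ with finite kernel $N_K(H)/H$, each summand has projective dimension over $RWH$ bounded by $l(K)\le l(H)-1$. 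The spectral sequence of the filtration then yields $\pd_{RWH}\Sigma\widetilde\FF_H(\Gamma)_\bullet\le l(H)$.

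The hard part is this last paragraph: one must (i) verify that assembling the associated graded pieces through the filtration spectral sequence contributes only $l(H)$ and not $l(H)+1$, and (ii) transfer the inductive bound from $WK$ to $N_{WH}(K)$ through the finite-kernel map, which in general requires care because the projective dimension along an infinite-index inclusion into $WK$ need not be controlled by the subgroup's, so the nilpotency of the finite subgroups and the structural role of $N_K(H)/H$ inside the nilpotent $K$ must be invoked to make the transfer clean.
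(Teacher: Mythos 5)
Your overall architecture (downward induction on $\FF$, base case at maximal finite subgroups, Lemma \ref{contractible} for the commensurable case) is reasonable, and indeed the paper's own proof is just a one-line reduction feeding Lemma \ref{contractible} into Theorem 2.12 of \cite{conch2}, which already packages exactly the induction you are trying to rebuild. Your base case is fine, and so is the commensurable case: commensurability of $N_\Gamma(H)$ and $N_\Gamma(K)$ does force $|C_\Gamma(H):C_\Gamma(K)|<\infty$ (since $C_\Gamma(K)\le N_\Gamma(K)\cap N_\Gamma(H)$ with finite index in $N_\Gamma(K)$), so $|\FF_H|\simeq\ast$ and the complex is acyclic. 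The gap is in the remaining inductive step, and it is not merely a matter of ``care'': the filtration you propose has associated graded pieces of \emph{infinite} projective dimension in precisely the interesting cases. Concretely, if $K>H$ is maximal in $\FF$, the summand of $G^j/G^{j+1}$ attached to $K$ is $\IND^{WH}_{N_{WH}(K)}R$ with $R$ the trivial module concentrated in a single degree (the lone chain $(K)$, your cone point), and $N_{WH}(K)=(N_\Gamma(K)\cap N_\Gamma(H))/H$ contains the nontrivial finite subgroup $N_K(H)/H$ (nontrivial by nilpotency of $K$) acting trivially; hence $\pd_{R N_{WH}(K)}R=\infty$ whenever $|N_K(H)/H|$ is not invertible in $R$. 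The same defect infects every summand: each piece is inflated along the surjection $N_{WH}(K)\to (N_\Gamma(K)\cap N_\Gamma(H))K/K$ with finite kernel $N_K(H)/H$, and inflation through a finite kernel does not preserve finite projective dimension (already $\pd_{RC_p}$ of the inflated trivial module is infinite over $R=\mathbb{F}_p$). No appeal to nilpotency repairs a single summand, in a single degree, of infinite projective dimension; once a graded piece has infinite $\pd$, the filtration spectral sequence and horseshoe-type arguments give you nothing.

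The structural reason the pieces blow up is that your filtration severs the augmentation and cone-point terms from the chains above them: it is only the \emph{combination} -- e.g.\ the kernel $M$ of the augmentation in Lemma \ref{cyclic}, or the acyclic complex $R\to R$ when $\FF_H$ is a single conjugation orbit with normal representative -- that has finite projective dimension, never the individual induced summands. A correct argument has to organize the induction so that each stage is an extension of complexes already known to have finite projective dimension, which is what Theorem 2.12 of \cite{conch2} supplies; either cite that theorem, as the paper does, or reconstruct its proof. As written, the last paragraph of your argument cannot be completed.
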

\begin{proof} Use Theorem 2.12 of \cite{conch2} and Lemma \ref{contractible}.
\end{proof}

\section{The case of cyclic or rank 2 elementary abelian finite $p$-subgroups}\label{smallrank}

\noindent Obviously, the results of the previous Section apply for groups having all their finite subgroups $p$-groups of bounded order. In the particular case when moreover all the finite subgroups are cyclic things work specially well. Note that in this case,
$$\AA_1=\{H\leq \Gamma\mid |H|=p\}.$$
Therefore we have:

\begin{lemma}\label{precyclic} Let $p$ be a prime and $\Gamma$ a group such that all its finite subgroups are cyclic $p$-groups of bounded order. Then
$$\pd_{R\Gamma} B(R\Gamma)\leq\underline{\cd}_R\Gamma\leq\pd_{R\Gamma} B(R\Gamma)+1$$
\end{lemma}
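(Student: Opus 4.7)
The lower bound is standard and is already stated in Theorem \ref{dimensionnilpotent}, so the task is to prove the upper bound. The plan is to apply Theorem \ref{dimensionnilpotent} and then show that for every $H\in\FF$ one has $\pd_{RWH}B(RWH)\leq\pd_{R\Gamma}B(R\Gamma)$.

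First I would record the key structural consequence of the cyclic $p$-group hypothesis: any finite subgroup of $WH=N_\Gamma(H)/H$ lifts to a finite subgroup of $\Gamma$ containing $H$, which by assumption is again a cyclic $p$-group; hence every finite subgroup of every $WH$ is a cyclic $p$-group, so $r(WH)\leq 1$ for all $H$. Plugging this into Theorem \ref{dimensionnilpotent} yields
$$\underline{\cd}_R\Gamma\leq\max_{H\in\FF/\Gamma}\bigl[\pd_{RWH}B(RWH)+1\bigr].$$
(Alternatively, observing that $\AA_1(WH)$ consists only of order-$p$ subgroups intersecting pairwise trivially, and hence is an antichain, one sees that $\Sigma\widetilde{\AA_1(WH)}_\bullet$ has length at most $1$, and Theorem \ref{dim2} combined with Lemma \ref{lowerbound} gives the same bound.)

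The main step is then the Weyl-group comparison $\pd_{RWH}B(RWH)\leq\pd_{R\Gamma}B(R\Gamma)$, which I would split into two stages. For the subgroup stage, write $N=N_\Gamma(H)$: the $RN$-module $B(R\Gamma)|_N$ decomposes as a direct sum of copies of $B(RN)$ indexed by $N\backslash\Gamma$, and since $R\Gamma$ is free as an $RN$-module, restriction preserves projectives, so $\pd_{RN}B(RN)\leq\pd_{R\Gamma}B(R\Gamma)$. For the quotient stage, identify $B(RWH)\cong B(RN)^H$ (functions constant on $H$-cosets) and take a free $RN$-resolution $F_\bullet\to B(RN)$; since $H$ acts freely on $N$ by left translation, the permutation module $B(RN)|_H$ is $RH$-free of rank $|H\backslash N|$, and so is every $F_n|_H$. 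A short induction on $n$, using the short exact sequences $0\to Z_n\to F_n\to Z_{n-1}\to 0$ with $Z_{-1}=B(RN)$, shows that every kernel $Z_n$ is $RH$-projective: the sequence splits over $RH$ because $Z_{n-1}$ is $RH$-projective and $RH$ is self-injective for $H$ finite. Consequently $(-)^H$ is exact on each of these short exact sequences and $F_\bullet^H\to B(RWH)$ remains exact. Finally, the norm-element identification $RWH\cong (RN)^H$, $\bar n\mapsto(\sum_{h\in H}h)n$, shows that each $F_n^H$ is $RWH$-free, giving a free $RWH$-resolution of $B(RWH)$ whose length equals that of $F_\bullet$.

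Chaining the inequalities then yields $\underline{\cd}_R\Gamma\leq 1+\pd_{R\Gamma}B(R\Gamma)$, as desired. The main obstacle I anticipate is the quotient stage: in characteristic dividing $|H|$ the functor $(-)^H$ is only left exact, so the entire reduction rests on the observation that $B(RN)|_H$ is $RH$-free, which is precisely what forces the kernels in the free resolution to be $H$-acyclic and thus makes $F_\bullet^H$ remain a resolution of the correct module.
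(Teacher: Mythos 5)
Your top-level strategy coincides with the paper's: both reduce the upper bound to Theorem \ref{dimensionnilpotent} via the observation that $r(WH)\leq 1$ (your lifting argument, that a finite subgroup of $WH$ pulls back to a finite --- hence cyclic $p$ --- subgroup of $\Gamma$ containing $H$, is the right justification), combined with the inequality $\pd_{RWH}B(RWH)\leq \pd_{R\Gamma}B(R\Gamma)$. The only real difference is that the paper simply cites \cite{krophollermislin} Lemma 7.3 for that last inequality, whereas you re-prove it; your reconstruction is workable but contains two misstatements. First, in the subgroup stage, $B(R\Gamma)|_N$ is \emph{not} a direct sum of copies of $B(RN)$ indexed by $N\backslash\Gamma$ when the index is infinite: a function with finite image need not be supported on finitely many cosets, so restriction to the orbits lands in the product, not the sum. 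What is true, and is all you need, is that $B(RN)$ is a direct summand of $B(R\Gamma)|_N$ (extend by zero off $N$, retract by restricting functions to $N$), which gives $\pd_{RN}B(RN)\leq\pd_{R\Gamma}B(R\Gamma)$. Second, in the quotient stage, $B(RN)|_H$ is not the permutation module $R[N]|_H$ and is not free of rank $|H\backslash N|$; it properly contains $\bigoplus_{H\backslash N}RH$. Its $RH$-freeness (projectivity would already suffice to start your induction) is a genuine theorem of Kropholler--Talelli \cite{krophollertalelli}, resting on N\"obeling's theorem that bounded integer-valued functions form a free abelian group; it is exactly the ``main feature'' of $B(R\Gamma)$ recorded in Section \ref{first}, so you should invoke it rather than assert it as obvious. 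With those two repairs, the rest of your quotient-stage argument (splitting the syzygy sequences over $RH$, exactness of $(-)^H$ on split sequences, and the norm identification $(RN)^H\cong RWH$) is sound, and the chain of inequalities closes as you describe; what your longer route buys is a self-contained proof of the cited Kropholler--Mislin lemma, at the cost of re-deriving facts the paper takes off the shelf.
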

\begin{proof} With the notation of Theorem \ref{dimensionnilpotent}, $r(WH)\leq 1$ for any $H\leq\Gamma$ finite. Moreover, $\pd_{RWH}B(RWH)\leq\pd_{R\Gamma} B(R\Gamma)$ (\cite{krophollermislin} Lemma 7.3). Thus it suffices to
apply Theorem \ref{dimensionnilpotent}.
\end{proof}

There are in the literature many examples of virtually torsion free groups $\Gamma$ having $\vcd_R\Gamma=\underline{\cd}_R\Gamma$. Using any of those it is very easy to construct an example having all its finite subgroups cyclic $p$-groups thus showing that the lower bound in Lemma \ref{precyclic} can be sharp.
In Example \ref{sharpbound} we will construct a group that shows that also the upper bound can be sharp, thus completing the proof of Theorem A. To do that we will be use the next results.

\begin{lemma}\label{cyclic} Let $\Gamma$ be a group such that all finite subgroups of $\Gamma$ are cyclic $p$-groups of bounded order. Let $M=\text{Ker}(w)$ where $w$ is the augmentation 
$$w:\bigoplus_{Q\in\AA_1/\Gamma}R\uparrow_{N_\Gamma(Q)}^\Gamma\to R.$$
Then $$\pd_{R\Gamma} M+1=\pd_{R\Gamma}\Sigma\widetilde{\AA}_{1\bullet}=\pd_{R\Gamma}\Sigma\widetilde\FF_{1\bullet}\leq\pd_{R\Gamma} B(R\Gamma)+1.$$
\end{lemma}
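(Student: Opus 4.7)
The plan is to exploit the extreme simplicity of $\AA_1$ under our hypothesis: since every non-trivial finite subgroup of $\Gamma$ is a cyclic $p$-group, any non-trivial elementary abelian subgroup has order exactly $p$. Two distinct subgroups of order $p$ are obviously incomparable, so $\AA_1$ is an antichain. Consequently $|\AA_1|$ is $0$-dimensional and the $R\Gamma$-chain complex $\AA_{1\bullet}$ is concentrated in degree $0$, where it coincides with the permutation module $\bigoplus_{Q\in\AA_1/\Gamma}R\uparrow_{N_\Gamma(Q)}^\Gamma$; moreover its $\Gamma$-augmentation to $R$ is precisely the map $w$ in the statement.

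With this identification, $\Sigma\widetilde\AA_{1\bullet}$ is the two-term complex $\bigl[\bigoplus_Q R\uparrow_{N_\Gamma(Q)}^\Gamma\xrightarrow{w}R\bigr]$, with the left term in degree $1$ and $R$ in degree $0$. The map $w$ is surjective (each summand augments onto $R$), so the short exact sequence $0\to M\to\bigoplus_Q R\uparrow_{N_\Gamma(Q)}^\Gamma\xrightarrow{w}R\to 0$ exhibits $\Sigma\widetilde\AA_{1\bullet}$ as $\Gamma$-quasi-isomorphic to the complex consisting of $M$ placed in degree $1$. By the invariance of projective dimension under quasi-isomorphism (Lemma~2.1 of \cite{conch2} and the paragraph just after Lemma~\ref{quillen}), a projective resolution of $M$ shifted up by one is a projective resolution of this shifted complex and conversely, so $\pd_{R\Gamma}\Sigma\widetilde\AA_{1\bullet}=\pd_{R\Gamma}M+1$.

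For the middle equality, cyclic $p$-groups are nilpotent, hence $\FF_1=\NN_1$. Lemma~\ref{quillen} then supplies a $\Gamma$-equivariant ordinary homotopy equivalence $|\FF_1|\simeq|\AA_1|$, which (again by the remark after its proof) induces a $\Gamma$-equivariant quasi-isomorphism between $\AA_{1\bullet}$ and $\FF_{1\bullet}$, hence between their augmented suspensions; invariance of $\pd$ under quasi-isomorphism yields $\pd_{R\Gamma}\Sigma\widetilde\FF_{1\bullet}=\pd_{R\Gamma}\Sigma\widetilde\AA_{1\bullet}$.

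Finally, the upper bound follows at once from Lemma~\ref{lowerbound}: the complex $\Sigma\widetilde\AA_{1\bullet}$ is $R$-free (a permutation module in degree $1$ and the free rank-one module $R$ in degree $0$) and has length $1$, so $\pd_{R\Gamma}\Sigma\widetilde\AA_{1\bullet}\leq 1+\pd_{R\Gamma}B(R\Gamma)$, the case $\pd_{R\Gamma}B(R\Gamma)=\infty$ being vacuous. The only mildly delicate step is the bookkeeping around the suspension/shift convention in the second paragraph, making sure the identification of $\Sigma\widetilde\AA_{1\bullet}$ with the shift of $M$ by one correctly accounts for the $+1$ in projective dimension.
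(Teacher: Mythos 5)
Your proof is correct and follows essentially the same route as the paper's: identify $\Sigma\widetilde\AA_{1\bullet}$ with the two-term complex $\bigl[\bigoplus_Q R\uparrow_{N_\Gamma(Q)}^\Gamma\to R\bigr]$ (using that $\AA_1$ is an antichain of order-$p$ subgroups), observe it is quasi-isomorphic to $M$ concentrated in degree $1$, invoke Lemma~\ref{quillen} for the middle equality and Lemma~\ref{lowerbound} for the upper bound. The only difference is that you spell out the antichain observation and the shift bookkeeping, which the paper leaves implicit.
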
 
\begin{proof}  Lemma \ref{quillen} implies that 
$|\FF_1|\simeq_\Gamma|\AA_1|$
so $\pd_{R\Gamma}\Sigma\widetilde\FF_{1\bullet}=\pd_{R\Gamma}\Sigma\widetilde{\AA}_{1\bullet}$. The chain complex $\Sigma\widetilde{\AA}_{1\bullet}$ is
$$\ldots \to 0\to\bigoplus_{Q\in\AA_1/\Gamma}R\uparrow_{N_\Gamma(Q)}^\Gamma\buildrel w\over\to R.$$
Obviously, this chain complex is quasi-isomorphic (via inclusion) to the chain complex having only $M$ concentrated at degree 1. 
The inequality follows from Lemma \ref{lowerbound}
\end{proof}

\begin{theorem}\label{cyclic2} Let $\Gamma=K\ltimes G$ with $K$ a finite cyclic $p$-group and $G$ torsion free with $n=\cd_RG<\infty$. Then
$\underline{\cd}_R\Gamma=n+1$ if and only if there is some $T\leq\Gamma$ finite and some $RC_G(T)$-module $U$ such that
$$\text{res}:\rm{H}^n(C_G(T),U)\to \prod_{Q\in\AA_1(WT)/C_G(T)}\rm{H}^n(C_G(Q),U)$$
is not an epimorphism (in particular, $n=\cd_R C_G(Q)=\cd_R C_G(T)$ for some $T<Q$ finite).
\end{theorem}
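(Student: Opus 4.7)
The plan is to combine Theorem~\ref{dim2} with Lemma~\ref{cyclic} to reduce the question to a projective-dimension computation for a specific $RWT$-module, extract an $\EXT$-theoretic criterion from the long exact sequence, and finally translate the $WT$-level criterion into the stated $C_G(T)$-level criterion.

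\emph{Step 1.} Since $\Gamma$ is virtually torsion-free with $\vcd_R\Gamma = \cd_R G = n$, Lemma~\ref{precyclic} gives $n \le \underline{\cd}_R\Gamma \le n+1$. Combining Theorem~\ref{dim2} with Lemma~\ref{cyclic} applied to each $WT$ (whose finite subgroups remain cyclic $p$-groups), we obtain $\underline{\cd}_R\Gamma = \max_T(\pd_{RWT} M_T + 1)$, where $M_T$ is the kernel of the augmentation $\bigoplus_{Q\in\AA_1(WT)/WT} R\uparrow^{WT}_{N_{WT}(Q)} \to R$ and $\pd_{RWT} M_T \le n$ holds automatically. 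Thus $\underline{\cd}_R\Gamma = n+1$ if and only if $\pd_{RWT} M_T = n$ for some finite $T \le \Gamma$.

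\emph{Step 2.} Applying $\EXT_{RWT}(-,V)$ to the short exact sequence $0 \to M_T \to \bigoplus_Q R\uparrow^{WT}_{N_{WT}(Q)} \to R \to 0$ and invoking Shapiro's lemma, the vanishing of $\EXT^{n+1}_{RWT}(M_T,V)$ assembles the long exact sequence into
$$0 \to \coker(\mathrm{res}_n) \to \EXT^n_{RWT}(M_T,V) \to \ker(\mathrm{res}_{n+1}) \to 0,$$
where $\mathrm{res}_i : H^i(WT,V) \to \prod_Q H^i(N_{WT}(Q),V)$ is the natural restriction. Hence $\pd_{RWT} M_T = n$ for some $V$ iff $\mathrm{res}_n$ fails to be surjective for some $V$; a dimension-shifting argument via $I = \COIND^{WT}_1 V$ (which is simultaneously cohomologically trivial over $WT$ and over each $N_{WT}(Q)$ by Mackey) handles the case in which only $\ker(\mathrm{res}_{n+1})$ contributes.

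\emph{Step 3.} Because $G$ is torsion-free and $T$ is finite, $T \cap G = 1$, and the map $g \mapsto gT$ embeds $C_G(T)$ as a normal torsion-free finite-index subgroup of $WT$. Each $Q \in \AA_1(WT)$ lifts to a finite subgroup $\tilde Q \le \Gamma$ with $T < \tilde Q$ and $[\tilde Q:T] = p$. Restricting the short exact sequence to $RC_G(T)$-modules, Mackey decomposition replaces the index set with $\AA_1(WT)/C_G(T)$, and the stabilizers contain $C_G(\tilde Q)$ as a finite-index subgroup. The Lyndon--Hochschild--Serre spectral sequence for $1 \to C_G(T) \to WT \to F_T \to 1$, together with the analogous sequences for the stabilizer extensions, converts the $WT$-level non-surjectivity into non-surjectivity of $H^n(C_G(T),U) \to \prod_{Q} H^n(C_G(Q),U)$. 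The parenthetical conclusion is then automatic: non-surjectivity forces $H^n(C_G(Q),U) \neq 0$ for some $Q$, and since $C_G(Q) \le C_G(T) \le G$ with $\cd_R G = n$, both cohomological dimensions must equal $n$.

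The main obstacle is the last translation: because $[WT:C_G(T)]$ need not be invertible in $R$, transfer maps are unavailable, so the passage between $RWT$- and $RC_G(T)$-cohomology must be carried out spectral-sequence-theoretically, with attention to the fact that the stabilizer $N_{C_G(T)}(Q)$ can differ from $C_G(\tilde Q)$ by a finite quotient of order dividing $p$ (corresponding to the automorphisms of the cyclic $p$-group $\tilde Q$ that fix $T$ pointwise). Tracking the compatibility of the restriction maps through these spectral sequences is where the hypothesis that $K$ is a $p$-group is used essentially.
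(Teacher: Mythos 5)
Your Step 1 matches the paper's reduction, but the argument goes astray at the point where you choose to compute $\EXT$ over $RWT$ rather than over $RC_G(T)$. The paper makes one further reduction \emph{before} touching any long exact sequence: after factoring out $T$ (so that $WT$ plays the role of $\Gamma$ and $C_G(T)$ that of $G$), it invokes a Serre-type argument --- for a finite-index subgroup, a module or chain complex of finite projective dimension has the same projective dimension over the subgroup ring --- to replace $\pd_{RWT}M_T$ by the projective dimension of the restriction of $M_T$ to the torsion-free group $C_G(T)$. Restricting the defining short exact sequence and applying Mackey turns the middle term into $\bigoplus_{Q\in\AA_1/C_G(T)}R\uparrow_{C_G(Q)}^{C_G(T)}$, and now the $\EXT_{RC_G(T)}(-,U)$ long exact sequence terminates: since $\cd_R C_G(T)\leq n$, the terms $\Ho^{n+1}(C_G(T),U)$ and $\Ho^{n+1}(C_G(Q),U)$ vanish, so $\EXT^n_{RC_G(T)}(M_T,U)$ is exactly the cokernel of the restriction map in the statement. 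This single move dissolves both difficulties you flag at the end.

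In your version neither difficulty is resolved. First, $WT$ has torsion, so $\Ho^{n+1}(WT,V)$ need not vanish and your sequence $0\to\coker(\mathrm{res}_n)\to\EXT^n_{RWT}(M_T,V)\to\ker(\mathrm{res}_{n+1})\to 0$ does not identify $\EXT^n$ with a cokernel of restriction; the proposed fix via $\COIND_1^{WT}V$ only shows that coinduced coefficients annihilate both ends of the sequence (hence give no test modules at all), and it does not yield the implication you need, namely that $\pd_{RWT}M_T=n$ forces $\coker(\mathrm{res}_n)\neq 0$ for \emph{some} $V$. Second, the passage from non-surjectivity of $\Ho^n(WT,V)\to\prod_Q \Ho^n(N_{WT}(Q),V)$ to non-surjectivity of $\Ho^n(C_G(T),U)\to\prod_Q \Ho^n(C_G(Q),U)$ is asserted to follow from comparing Lyndon--Hochschild--Serre spectral sequences, but no comparison is carried out, and a map of spectral sequences controls only the associated graded of a filtration, not surjectivity of maps of abutments in both directions of an ``if and only if''. (Incidentally, the discrepancy you worry about between $N_{C_G(T)}(Q)$ and $C_G(\tilde Q)$ is vacuous: conjugation by an element of $G$ induces an automorphism of the finite subgroup $\tilde Q$ which becomes the identity after composing with the injection of $\tilde Q$ into the cyclic group $K$, hence is the identity.) As written, Steps 2 and 3 do not constitute a proof; the missing idea is to descend to the torsion-free finite-index subgroup first and only then take $\EXT$.
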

\begin{proof} By Theorem \ref{dim2}, the condition $\underline{\cd}_R\Gamma=n+1$ is equivalent to the fact that for some finite $T$,
$$\pd_{RWT}\Sigma\widetilde\AA_1(WT)_\bullet=n+1.$$
As $\pd_{RWT}\Sigma\widetilde\AA_1(WT)_\bullet\leq\cd_RC_G(T)+1$ by Lemma \ref{cyclic} this already implies that $n=\cd_RC_G(T)$ (recall that  in this case $\pd_{RWT}B(RWT)=\vcd_RWT=\cd_RC_G(T)$). Moreover for any $T\leq Q$, $Q\leq N_\Gamma(T)$ and $C_G(Q)\leq C_G(T)$ so to simplify notation we may factor out $T$ and assume $T=1$. Note also that as the index $|\Gamma:G|$ is finite, a Serre-type argument shows that the projective dimensions of any $\Gamma$-module or $\Gamma$-chain complex of finite $\Gamma$-projective dimension with respect to both $\Gamma$ and $G$ are equal.

By Lemma \ref{cyclic}, $\pd_{RG}\Sigma\widetilde\AA_{1\bullet}=n+1$ if and only if $\pd_{RG}M=n$ where $M$ fits in the short exact sequence
(here we are seeing this as a sequence of $G$-modules, by the remark above this does not affect the projective dimensions)
$$0\to M\rightarrow\bigoplus_{Q\in\AA_1/G}R\uparrow_{C_G(Q)}^G\rightarrow R\to 0.$$
Consider the associated
 long exact sequence of Ext functors for any $RG$-module $U$. Taking into account that 
$$\text{Ext}_{RG}^i(\bigoplus_{{\AA}_1/G}R_{C_G(Q)}\uparrow^G,U)=\prod_{\AA_1/G}\text{H}^i(C_G(Q),U)$$
we see that the last nontrivial terms of that sequence are
$$\text{H}^n(G,U)\to\prod_{{\AA}_1/G}\text{H}^n(C_G(Q),U)\to\text{Ext}_{RG}^n(M,U)\to 0$$
where the first map is restriction. This yields the claim.
\end{proof}

In the next result we consider the same kind of groups but with the extra assumption that $\Gamma$ satisfies the Bredon analogue to the ordinary $\FP_\infty$ condition. This is called
Bredon $\FP_\infty$ or, for the case of the family $\FF$ of finite subgroup which is the one we are considering, $\UFP_\infty$. The easiest way to define this condition is as follows:

\begin{definition}\label{bredonfpinfty}{\rm 
A group $\Gamma$ is of type $\UFP_\infty$ over $R$ if it has finitely many conjugacy classes of finite subgroups and for any $H\in\FF$, the Weyl group $WH$ is of type $\FP_\infty$ over $R$.}\end{definition}
With the right notion of finitely generated contramodule, this definition can be shown to be equivalent to the fact that the trivial  contramodule has a resolution by finitely generated projective contramodules. In particular, if there is a finite type model for $\egamma$ then $\Gamma$ is of type $\UFP_\infty$  (for these and other related Bredon cohomological finiteness conditions, see \cite{kmn} and \cite{lueck}).

Observe also that it is a consequence of  \cite{Brown} IX Lemma 13.2 that if $\Gamma$ is virtually torsion free of type $\FP_\infty$ 
and all its finite subgroups are $p$-groups, then $\Gamma$ has finitely many conjugacy classes of finite subgroups.

\begin{corollary}\label{cyclic3} Let $\Gamma=K\ltimes G$ with $K$ a finite cyclic $p$-group and $G$ torsion free with $n=\cd_RG<\infty$. Assume that $\Gamma$ is of type $\UFP_\infty$ over $R$. Then
$\underline{\cd}_R\Gamma=n+1$ if and only if for some $T\leq \Gamma$ finite
$$\text{res: }\rm{H}^n(C_G(T),RC_G(T))\to \bigoplus_{Q\in\AA_1(WT)}\rm{H}^n(C_G(Q),RC_G(Q))$$
is not an epimorphism.
\end{corollary}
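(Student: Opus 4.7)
The strategy is to deduce this from Theorem \ref{cyclic2} by using the $\UFP_\infty$ hypothesis to reduce the coefficient module $U$ to the distinguished choice $U = RC_G(T)$ and then to rewrite the target via Shapiro's lemma. First, I would extract the finiteness consequences of $\UFP_\infty$: for every finite $T \leq \Gamma$, the Weyl group $WT$ is $\FP_\infty$ over $R$ by definition, and since $C_G(T)$ embeds as a torsion-free subgroup of finite index in $WT$, so is $C_G(T)$; the same argument applies to each $C_G(Q)$ for $Q \in \FF$. Moreover $WT$ is virtually torsion free with all finite subgroups $p$-groups, so by \cite{Brown} IX Lemma 13.2 it has finitely many conjugacy classes of finite subgroups, making $\AA_1(WT)/C_G(T)$ finite and turning the product in Theorem \ref{cyclic2} into a finite direct sum.

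The core technical step is that the module $M$ fitting in the short exact sequence $0 \to M \to \bigoplus_{Q \in \AA_1(WT)/C_G(T)} R\uparrow^{C_G(T)}_{C_G(Q)} \to R \to 0$ (from Lemma \ref{cyclic}, after factoring out $T$ as in the proof of Theorem \ref{cyclic2}) is itself $\FP_\infty$ over $RC_G(T)$. This follows from a standard horseshoe argument, since $R$ and each induced module in the finite sum are $\FP_\infty$ over $RC_G(T)$. Consequently $\text{Ext}^n_{RC_G(T)}(M,-)$ commutes with arbitrary direct sums, and $\text{Ext}^{n+1}_{RC_G(T)}(M,-)$ vanishes by Lemma \ref{cyclic}. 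Writing any $U$ as a quotient of a free module $F = \bigoplus RC_G(T)$, these two facts combine via the long exact sequence to show $\text{Ext}^n(M,U) \neq 0$ for some $U$ iff $\text{Ext}^n(M, RC_G(T)) \neq 0$, reducing Theorem \ref{cyclic2} to the single coefficient $U = RC_G(T)$.

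It remains to identify the target with $\bigoplus_{Q \in \AA_1(WT)} H^n(C_G(Q), RC_G(Q))$. The decomposition $RC_G(T) = \bigoplus_{C_G(Q) \backslash C_G(T)} RC_G(Q)$ as left $RC_G(Q)$-modules, combined with $\FP_\infty$-ness of $C_G(Q)$, yields $H^n(C_G(Q), RC_G(T)) = \bigoplus_{C_G(Q)\backslash C_G(T)} H^n(C_G(Q), RC_G(Q))$. To collapse the resulting double sum (over orbits, then cosets) into the single sum indexed by $\AA_1(WT)$, I verify that the stabilizer of $Q$ in $C_G(T)$ under conjugation equals $C_G(Q)$: in the semidirect-product setting any $g \in G$ normalizing a finite subgroup $Q \leq \Gamma$ must centralize it, because $gqg^{-1}q^{-1}$ projects trivially to $K$ and so lies in $G \cap Q = 1$. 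Hence each $C_G(T)$-orbit has size exactly $[C_G(T):C_G(Q)]$, matching the Shapiro count, and conjugation isomorphisms in cohomology identify the summands across each orbit.

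The main obstacle is the reduction from general $U$ to $U = RC_G(T)$: it depends essentially on $M$ being $\FP_\infty$, which is exactly where the $\UFP_\infty$ hypothesis is used. The subsequent Shapiro-type bookkeeping is routine, but relies on the normalizer/centralizer coincidence $N_G(Q) = C_G(Q)$ peculiar to the semidirect product structure $\Gamma = K \ltimes G$ with $G$ torsion free.
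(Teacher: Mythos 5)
Your proposal is correct and follows essentially the same route as the paper's proof: both reduce to the case $T=1$, use the $\UFP_\infty$ hypothesis to replace arbitrary coefficient modules $U$ by the single module $RG$, and then identify $\bigoplus_{Q\in\AA_1/G}\text{H}^n(C_G(Q),RG)$ with $\bigoplus_{Q\in\AA_1}\text{H}^n(C_G(Q),RC_G(Q))$ by the Shapiro-type count resting on $N_G(Q)=C_G(Q)$. The only difference is one of packaging: the paper performs the reduction by applying Brown VIII Proposition 6.8 (so that $\text{H}^n(G,U)\cong\text{H}^n(G,RG)\otimes_{RG}U$ and right-exactness of $-\otimes_{RG}U$ transfers surjectivity from $U=RG$ to all $U$), whereas you reach the same conclusion by showing that $M$ is $\FP_\infty$ of projective dimension at most $n$, so that the vanishing of $\text{Ext}^n_{RG}(M,-)$ is detected on free modules.
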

\begin{proof} We continue the proof of Theorem \ref{cyclic2}  (in particular, we only have to deal with the case $T=1$ in which $C_G(T)=G$) with the extra assumption that $\Gamma$ is of type $\UFP_\infty$ over $R$. 
Let $Q\in\AA_1$ and observe that as $C_G(Q)$ is of type $\FP$, by  \cite{Brown} VIII Proposition 6.8 there is a $G$-isomorphism
$$\begin{aligned}\text{H}^n(C_G(Q),RG)
=\text{H}^n(C_G(Q),RC_G(Q))\otimes_{C_G(Q)}RG\\
=\text{H}^n(C_G(Q),RC_G(Q))\uparrow_{C_G(Q)}^G\\
\end{aligned}$$

This means that we only have to prove that if the map 
$$\text{res: }\text{H}^n(G,RG)\to\bigoplus_{Q\in\AA_1/G}\text{H}^n(C_G(Q),RG) =\bigoplus_{Q\in\AA_1}\text{H}^n(C_G(Q),RC_G(Q))$$
 is an epimorphism, then $\underline{\cd}_R\Gamma=n$.

Now, take an arbitrary $RG$-module $U$. Applying the right exact functor  $(-)\otimes_{RG}U$ and using again  \cite{Brown} VIII Proposition 6.8  we get an epimorphism 
$$\begin{aligned}
\text{H}^n(G,U)= \text{H}^n(G,RG)\otimes_{RG}U\buildrel{\text{res}}\over\rightarrow
\bigoplus_{Q\in\AA_1/G}\text{H}^n(C_G(Q),RG)\otimes_{RG}U\\
=\bigoplus_{Q\in\AA_1/G}\text{H}^n(C_G(Q),RC_G(Q))\otimes_{RC_G(Q)}U\\
=\bigoplus_{Q\in\AA_1/G}\text{H}^n(C_G(Q),U)\\
\end{aligned}$$
It suffices to use Theorem \ref{cyclic2}.
\end{proof}

\medskip

In a celebrated paper (\cite{learynucinkis}), Leary and Nucinkis construct many interesting examples of groups using the famous Bestvina-Brady construction. One of their examples consists of a group $\Gamma$ having $\vcd\Gamma=2$ and $\underline{\cd}\Gamma=3$ thus proving that these two invariants may differ. Their group $\Gamma$ is a semi direct product of a torsion free group by the alternating group $A_5$ thus it can not be used to show that the upper bound in the second equation of Theorem A can be sharp. Here we perform a similar construction but to get groups $\Gamma$ as before such that their non-trivial finite subgroups have all order $p$ for a fixed prime $p$. The construction is based in a result of Jones, see \cite{Jones}, who proved that any $F$-acyclic complex where $F$ is a field of prime characteristic $q$ is the fixed point of the action of a cyclic group of order coprime with $q$ on a contractible space.

\begin{example}\label{sharpbound}
{\rm Consider the $CW$-structure of $\R P^2$ with only one cell for each dimension 
and denote by $l$ and $c$ the 2- and 1-dimensional cells respectively so that $\delta(l)=2c$.
Take  an odd prime $p$ (any odd integer would do) and let $Q$ be the cyclic group of $p$ elements. By \cite{Jones} Theorem 1, $\R P^2$ can be embedded in a contractible finite $CW$-complex with a $Q$-action so that the fixed points subspace is precisely $\R P^2$. Explicitly the complex can be constructed as follows: Let $Q$ act trivially on $\R P^2$ and
add $p$ free $Q$-cells $s_0,\ldots,s_{p-1}$ on dimension $2$ that kill the 1-homology, i.e. with $\delta(s_i)=c$.
Then the new homology group in degree 2 is a free $Q$-module. In fact, it is the free $\Z$-module on $l-s_i-s_{i+1}$ for $i=1,\ldots,p$ where the subindices are taken mod $p$. So we may also kill this new homology by adding free $Q$-cells on dimension 3.
Let $L$ be a flag triangulation of the contractible complex constructed above. Then $L$ is a contractible flag complex with a simplicial admissible $Q$-action (admissible means that if a simplex is fixed, all its faces are also fixed) and the fixed points  subcomplex $L^Q$ has the same cohomology  as  $\R P^2$.

Let $G=H_L$ be the Bestvina-Brady group associated to $L$. The action of  $Q$ on $L$ induces an action of $Q$ on $G$ so we may define $\Gamma=Q\ltimes G$. Then by \cite{learynucinkis} Theorem 3,  $C_G(Q)=H_{L^Q}$, in particular  $C_G(Q)$ is also a Bestvina-Brady group. Moreover the fact that $L^Q\neq\emptyset$ implies that all the nontrivial finite subgroups of $\Gamma$ are conjugated. 
 As $\text{dim}L=3$ and it is contractible, Theorem 22 of \cite{IanMuge} implies that $\text{cd}G=3$. On the other hand, 
 $\text{H}^2(L^Q,\Z)\neq 0$ thus the same result implies $\text{cd}H_{L^Q}=3$.

Now denote by $Z$ the infinite cyclic group. By \cite{IanMuge} Corollary 12 there is a short exact sequence
$$0\to\prod_Z\text{H}^2(L^Q,\Z)\to\text{H}^3(H_{L^Q},\Z)\to U\to 0$$
for certain $\Z[Z]$-module $U$. As $\text{H}^2(L^Q,R)=\Z_2$ we see that $\text{H}^3(H_{L^Q},\Z)$ can not be finitely generated as an abelian group.  On the other hand, by the original result of Bestvina-Brady (see \cite{learynucinkis} Theorem 1) the fact that $L$ is contractible implies that $G$ is of type $\FP_\infty$ thus $\text{H}^3(G,\Z)$ is finitely generated (as an abelian group). This implies that 
$$\text{res}:
\text{H}^3(G,\Z)\to\text{H}^3(C_G(Q),\Z)=\text{H}^3(H_{L^Q},\Z)$$
 is not an epimorphism so by Theorem \ref{cyclic2}, $\underline{cd}\Gamma=\cd G+1=4$.

However $L^Q$ is not acyclic thus the group $H_{L^Q}=C_{H^L}(Q)$ is not of type $\FP_\infty$ thus $\Gamma$ is not $\UFP_\infty$. Also, $\Gamma$ is of type $\UFP_\infty$ over any ring $R$ where $2$ can be inverted, moreover, if $2$ can be inverted in $R$, then $\cd_RG=\underline{\cd}_R\Gamma=3$.

This can be generalized as follows. Let $q$ be a prime and let $M$ be the Moore space obtained from the $m$-sphere attaching an $m+1$-cell via a degree $q$ attaching map, so $M$ has dimension $m+1$. Let $Q=C_p$ be a finite cyclic group of prime order $p\neq q$. Use Jones procedure to embed $M$ in a finite $CW$-complex $T$ of dimension $m+2$ such that $T$ is contractible and $Q$ acts cellularly on $T$  with $T^Q=M$. Now, let $L$ be a flag triangulation of $T$ and consider the associated Bestvina-Brady group $G=H_L$.  As before we may form the semi direct product $\Gamma=Q\ltimes G$ and the same argument shows
$$\underline{cd}\Gamma=\cd G+1.$$}
\end{example}

\begin{remark}{\rm By \cite{learynucinkis},  the Bestvina-Brady construction can not be used to produce examples of groups $\Gamma$ with $\vcd\Gamma<\underline\cd\Gamma$ and $\Gamma$ being of type $\UFP_\infty$. }
\end{remark}

So far in this Section we have considered mainly groups whose finite subgroups have the simplest possible structure. In particular, the
chain complex associated to the poset of non-trivial subgroups was fairly easy to describe (this has also been exploited in \cite{langerluck} and \cite{yagoramon}). Things change dramatically if one wants to include more complicated finite subgroups. In the next result we analyze one of the properties of the poset of finite subgroups in the next easiest case, i.e., a semi direct product of a torsion free group by a $p$-elementary abelian group of rank 2.

\begin{theorem}\label{acyclic} Let $\Gamma=K\ltimes G$ with $G$ torsion free and $K\cong C_p\times C_p$. Assume that for any $G\leq S\leq\Gamma$ there is only one conjugacy class of finite subgroups $H$ of $\Gamma$ such that $HG=S$.
Then $|\FF_1|$ is 2-spherical or acyclic if and only if $G=\langle C_G(Q)\mid 1<Q<K\rangle$. 
\end{theorem}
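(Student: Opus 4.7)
The condition ``$|\FF_1|$ is 2-spherical or acyclic'' asks that $\tilde H_\ast(|\FF_1|)$ be concentrated in a single positive degree or vanish. Since $G$ is torsion free, every finite subgroup of $\Gamma$ injects into $K = \Gamma/G$, so non-trivial finite subgroups have order $p$ or $p^2$, and $|\FF_1|$ is one-dimensional. For a one-dimensional complex, the reduced homology lives only in degree $1$ precisely when the complex is connected. Thus the plan is to reduce the statement to proving that $|\FF_1|$ is connected iff $G=\langle C_G(Q)\mid 1<Q<K\rangle$.

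First I would unravel the $\Gamma$-equivariant bipartite structure of $|\FF_1|$. Let $Q_1,\dots,Q_{p+1}$ be the order-$p$ subgroups of $K$. The unique-conjugacy hypothesis applied to $S=Q_iG$ and to $S=\Gamma$ gives $\Gamma$-orbit representatives $Q_1,\dots,Q_{p+1}$ on the $V_1$-side and $K$ on the $V_2$-side. The key elementary fact, used repeatedly, is that if $x\in K$ and $h\in G$ satisfy $hxh^{-1}\in K$, then $hxh^{-1}$ and $x$ have the same image in $\Gamma/G=K$ and therefore coincide, forcing $h\in C_G(x)$. This yields $N_\Gamma(K)=K\cdot C_G(K)$, $N_\Gamma(Q_i)=K\cdot C_G(Q_i)$, and edge stabilizer $N_\Gamma(Q_i)\cap N_\Gamma(K)=K\cdot C_G(K)$. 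As $G$-sets therefore $V_2\cong G/C_G(K)$, $V_1\cong\bigsqcup_i G/C_G(Q_i)$, and $|\FF_1|/G$ is the star tree with centre $[K]$ and $p+1$ leaves $[Q_i]$.

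The central reachability claim is: for $g\in G$, the vertex $gKg^{-1}\in V_2$ lies in the same connected component of $|\FF_1|$ as $K$ iff $g\in H:=\langle C_G(Q_1),\dots,C_G(Q_{p+1})\rangle$. Combining the elementary fact above with the computation $g^{-1}Q_ig\leq K\Leftrightarrow g\in K\cdot C_G(Q_i)$, one sees that the neighbours of $K$ in $V_2\setminus\{K\}$ through some $Q_i$ are exactly $cKc^{-1}$ for $c\in C_G(Q_i)$. An inductive unwinding of an arbitrary path $K=K_0-Q'_{j_1}-K_1-\dots-K_n=gKg^{-1}$ then shows that the conjugating element is a product of elements from $\bigcup_i C_G(Q_i)$, hence lies in $H$; conversely each element of $H$ is clearly reached.

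Finally, $G$ acts transitively on $V_2$ with stabilizer $C_G(K)\leq H$, so the component of $K$ exhausts $V_2$ iff $G=H\cdot C_G(K)=H$; and if it exhausts $V_2$ it also exhausts $V_1$ because every vertex of $V_1$ is adjacent to some vertex of $V_2$. This gives the desired equivalence. The main obstacle is the orbit and stabilizer bookkeeping in the first step together with the reachability induction; once these are in place, the conclusion is immediate. Optionally, one can reinterpret the reachability lemma as a Bass--Serre statement: since $|\FF_1|/G$ is a tree, $|\FF_1|$ is connected iff $G$ equals the fundamental group of the associated graph of groups, i.e.\ iff $G$ is generated by the vertex stabilizers, and $C_G(K)\leq C_G(Q_i)$ makes $C_G(K)$ redundant in the generating set.
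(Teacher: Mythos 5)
Your proof is correct and takes essentially the same route as the paper's: reducing ``2-spherical or acyclic'' to connectedness of the graph $|\FF_1|$ is exactly what the paper's short exact sequence $0\to\Sigma\widetilde B_\bullet\to\Sigma\widetilde\FF_\bullet\to C_\bullet\to 0$ achieves (the vanishing of the map $f$ says each $1_K-1_{K^g}$ is a boundary, i.e.\ that $K$ and $K^g$ lie in the same component), and your reachability induction, resting on the fact that $Q\leq K\cap K^g$ with $g\in G$ forces $g\in C_G(Q)$, is precisely the paper's alternating chain read as a path in the bipartite graph. The only cosmetic difference is that you package the reduction via the one-dimensionality of $|\FF_1|$ and $\pi_0$ instead of the long exact homology sequence.
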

\begin{proof} Let $B_\bullet$ be the $\Z\Gamma$-chain complex associated to the $\Gamma$-set of subgroups of order $p^2$. There is a short exact sequence of chain complexes 
$$0\to\Sigma\widetilde B_\bullet\to\Sigma\widetilde\FF_\bullet\to C_\bullet \to 0$$
for certain $C_\bullet$ of length 2 vanishing on degree 0. Then $C_2$ is the free $R$-module on the set of pairs $H_1<H_2\leq\Gamma$ with
$|H_1|=p,$ $|H_2|=p^2$ whereas $C_1$  is the free $R$-module on the set of subgroups of $\Gamma$ or order $p$ and the differential $\bar\delta:C_2\to C_1$ maps $1_{H_1<H_2}$ onto $1_{H_1}$. The hypothesis on $\Gamma$ implies that any $H\leq\Gamma$ of order $p$ is contained in some subgroup of order $p^2$ thus $\text{H}_1(C_\bullet)=0$.
From the long exact sequence of homology groups we get:
$$0\to\text{H}_2(\Sigma\widetilde\FF_\bullet)\to\text{H}_2(C_\bullet)\to\text{H}_1(\Sigma\widetilde B_\bullet)\buildrel f\over\to
\text{H}_1(\Sigma\widetilde\FF_\bullet)\to  0.$$
Then, $|\FF_1|$ is 2-spherical or acyclic if and only if $f=0$. The hypothesis implies that $\text{H}_1(\Sigma\widetilde B_\bullet)$ is freely generated as an abelian group by the elements of the form $1_K-1_{K^g}$ where $g$ runs over a set of representatives of the classes in $G/C_G(K)$, therefore the map $f$ is zero if and only if for any $g\in G$, $1_K-1_{K^g}\in\text{Im}\delta$ where $\delta$ is the map
$$\bigoplus_{\{Q<H\mid Q,H\in\AA_1\}}\Z\buildrel\delta\over\rightarrow\bigoplus_{\{Q\mid Q\in\AA_1\}}\Z$$
with $\delta(1_{Q<H})=1_H-1_Q$.

Assume first that $1_K-1_{K^g}\in\text{Im}\delta$ for any $g\in G$. Then there is a  chain of elements $g_1,\ldots,g_t\in G$ and of subgroups $Q_1,\ldots,Q_t\leq K$ of order $p$
such that $g=g_t$ and 
\begin{equation}\label{delta}\delta(1_{Q_1<K}-1_{Q_1<K^{g_1}}+1_{Q_2^{g_1}<K^{g_1}}-1_{Q_2^{g_1}<K^{g_2}}+\ldots-1_{Q_t<K^{g_t}})=1_K-1_{K^g}.\end{equation}
This implies 
$Q_i^{g_{i-1}}\leq K^{g_{i-1}}\cap K^{g_i}$
and putting $g_0=1$, we have $Q_i\leq K\cap K^{g_ig_{i-1}^{-1}}$ for $i=1,\ldots,t$. Therefore by Lemma 3.4 of \cite{conch2}, $g_ig_{i-1}^{-1}\in C_G(Q_i)$ thus 
$$g=g_t=g_tg_{t-1}^{-1}g_{t-1}g_{t-2}^{-1}\ldots g_2g_1^{-1}g_1g_0^{-1}\in \langle C_G(Q)\mid 1<Q<K\rangle.$$

Conversely, assume $G=\langle C_G(Q)\mid 1<Q<K\rangle$. Let $g\in G$ and put $g^{-1}=h_1\ldots h_t$ where each $h_i\in C_G(Q_i)$ for some $1<Q_i<K$. Then $Q_i\leq K\cap K^{{h_i}^{-1}}$ and we can form a chain as in (\ref{delta}) with $g_0:=1$ and $g_i:=h_i^{-1}g_{i-1}$ for $i=1,\ldots,t$ mapping onto $1_K-1_{K^g}$.

\end{proof}

\begin{remark}{\rm Basically with the same proof it can be deduced that the condition in the hypothesis is equivalent to $\text{H}_1(\Sigma\widetilde\FF_\bullet)=0$ even if $|K|=p^n$ for some $n$.}
\end{remark}

\begin{example}\label{artinconj} {\rm Let $\Gamma=K\ltimes A_L$ where $K$ is a finite group acting admissibly and simplicially on the flag complex $L$ and $A_L$ is the right-angled Artin group associated to $L$. Then for each subgroup $A_L\leq S\leq\Gamma$  there is a single conjugacy class of finite subgroups $H\leq \Gamma$ such that $HA_L=S$. This is a consequence of the proof of Theorem 3 in \cite{learynucinkis}. The explicit argument is as follows: Observe that we may choose $H\leq K$ with $HA_L=S$ and let $X$ denote the model for the Eilenberg-Mac Lane space $K(A_L,1)$ whose $n$-cells are $n$-cubes corresponding to pairs $(gx_0,\sigma)$ where  $x_0\in X$ is a fixed vertex; $g\in A_L$ and $\sigma$ is an $(n-1)$-simplex of $L$ (see \cite{learynucinkis} Section 3). Then $X$ is a $\text{CAT}(0)$-space with a $\Gamma$ action such that for $t\in K$; $(gx_0,\sigma)^t=(g^tx_0,\sigma^t)$.  The fact that $X$ is $\text{CAT}(0)$ implies that for any finite $H$ as before,  $X^H$ is contractible, in particular non-empty thus for some $g\in A_L$, $gx_0\in X^H$ which means that $g^{-1}Hg\leq K$ (the contractibility of the $X^H$'s also implies the well known fact that $X$ is a model for $\egamma$). 
Moreover  by \cite{learynucinkis} Theorem 2 (due to Crisp), $C_{A_L}(K)=A_{L^K}$.

Now, let $L$ be the simplicial complex represented by

\centerline{   \begin{tikzpicture}
 \draw[black]
    (5,0) -- (5.5, 0.5) -- (6,0);
    \draw[black] (5,1) -- (5.5,0.5) --(6,1);
 \filldraw (5,0) circle (2pt) node[above=3pt]{\tiny 2};
  \filldraw (6,0) circle (2pt) node[above=3pt]{\tiny 3};
 \filldraw (5,1) circle (2pt) node[above=3pt]{\tiny 1};
 \filldraw (6,1) circle (2pt) node[above=3pt]{\tiny 4};
  \filldraw (5.5,0.5) circle (2pt) node[above=3pt]{\tiny 5};
   \end{tikzpicture}}
   
   \bigskip
\noindent and let the following permutation groups act on $L$ by permuting the labelled vertices:
$$K_1:=\langle(1,2),(3,4)\rangle;$$
$$K_2:=\langle(1,2)(3,4),(1,3)(2,4)\rangle.$$

Then Theorem \ref{acyclic} implies that $\text{H}_1(|\FF_1(K_1\ltimes A_L)|)=0\neq\text{H}_1(|\FF_1(K_2\ltimes A_L)|).$}
\end{example}

\section{Euler classes}\label{euler}

\noindent In this Section we shall concentrate on equivariant Euler classes of virtually torsion free groups $\Gamma$ of type $\UF$ having all their finite subgroups either nilpotent or $p$-groups or cyclic $p$-groups. Recall that $\Gamma$ is of type $\UF$ if there is a cocompact $\egamma$, this is just the Bredon analogue of the property  $\F$ of admitting a cocompact model of $\operatorname{E}\Gamma$. 
Many interesting families of groups such as for example word hyperbolic groups, solvable groups of type $\FP_\infty$, mapping class groups and finite extensions of right-angled Artin groups by a finite group of admissible simplicial automorphisms of the associated flag complex are known to be of type $\UF$. Any group of type $\UF$ is also of type $\UFP_\infty$ so it has only finitely many conjugacy classes of finite subgroups.

Using Hattori-Stallings ranks, we could get analogous results for groups of type $\UFP$, i.e., groups which are $\UFP_\infty$ and have $\underline{\cd}\Gamma<\infty$ (equivalently, groups for which there is a finite length resolution of the trivial contramodule by finitely generated projective Bredon contramodules). However, for simplicity we not consider here that case and moreover we work only with coefficients in $\Z$ in this Section.

We use the same notation as in \cite{conch2}, see also \cite{LuckL2} Definition 6.84. So we let $\Gamma$ be a group of type $\UF$ and put
$$\chi^\Gamma(\egamma):=\sum_{H\in\FF/\Gamma}e^{WH}(\Sigma\widetilde\FF_{H\bullet})$$
where for a group $S$, $e^S(C_\bullet)$ denotes the equivariant Euler characteristic of an $S$-chain complex $C_\bullet$ having an $S$-finite $S$-free resolution (i.e., the alternating sum of the $S$-ranks of the free modules of that resolution). The same applies for $S$-modules, seen as chain complexes concentrated in degree 0. We will use the following properties of these Euler characteristics: They are invariant under $S$-quasi-isomorphisms and if $C_\bullet$ is a chain complex of length $n$, 
\begin{equation}\label{decomposition}e^S(C_\bullet)=\sum_{i=0}^ne^S(C_i)
\end{equation}
 when all these Euler characteristics are defined. Moreover,
if $S\leq T$, 
\begin{equation}\label{induction}e^T(C_\bullet\uparrow_S^T)=e^S(C_\bullet)\end{equation}
and in the case when the index $|T:S|$ is finite we also have for a $T$-complex $D_\bullet$,
\begin{equation}\label{finindex}e^T(D_\bullet)={1\over|T:S|}e^S(D_\bullet),\end{equation}
again assuming that $e^T(D_\bullet)$ is defined.
(For these and many other properties of equivariant Euler characteristics, see \cite{Brown} IX). Using (\ref{finindex}), we may extend slightly the definition of Euler characteristic of a chain complex:

\begin{definition}{\rm Let $\Gamma$ be a group with $G\leq\Gamma$ torsion free of finite index and let $C_\bullet$ be a $\Gamma$-chain complex (or module) admitting a finite free resolution as $G$-chain complex. Then
$$e^\Gamma(C_\bullet):={1\over|\Gamma:G|}e^G(C_\bullet).$$}
\end{definition}
One easily checks that all the properties above remain true for this more general version. In fact
this is exactly what is done in \cite{Brown} IX to define Euler characteristics of groups $\Gamma$ which are virtually of type $\F$. With the same notation as there we  put for $\Gamma$ virtually $\F$:
$$\chi(\Gamma):=e^\Gamma(\Z).$$
We also use $\chi$ for the ordinary Euler characteristic of a finite simplicial complex and $e$ for the ordinary Euler characteristic of a finite complex of finitely generated free abelian groups.

For an arbitrary group $\Gamma$ we set
$$\Omega:=\{H\in\FF/\Gamma\mid N_\Gamma(H)\text{ is not commensurable with }N_\Gamma(K)\text{ for any }H<K\}.$$ 

\begin{lemma}\label{eulernilpotent} Let $\Gamma$ be a group of type $\UF$ with all its finite subgroups nilpotent. Then for any $H\leq\Gamma$ finite, the coefficient of $[\Gamma/H]$ in $\chi^\Gamma(\egamma)$ equals the coefficient of $[WH/1]$ in $\chi^{WH}(\underline{\text{E}}WH)$. Thus
$$\chi^\Gamma(\underline{E}\Gamma)
=\sum_{H\in\Omega}e^{WH}(\Sigma\widetilde{\AA}_1(WH)_\bullet)[\Gamma/H].$$
\end{lemma}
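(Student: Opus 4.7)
The plan is to compute the coefficient of each basis element $[\Gamma/H]$ in $\chi^\Gamma(\egamma)$ separately and show that (i) the coefficient agrees with the claimed comparison to $\chi^{WH}(\underline{\operatorname{E}}WH)$, and (ii) it vanishes unless $H\in\Omega$. From the very definition, the coefficient of $[\Gamma/H]$ is $e^{WH}(\Sigma\widetilde\FF_{H\bullet})$, so the whole lemma is an assertion about these equivariant Euler characteristics.

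First I would observe that because every finite subgroup of $\Gamma$ is nilpotent, so is every finite subgroup of $WH=N_\Gamma(H)/H$ (a quotient of a finite nilpotent group); in particular $\FF_1(WH)=\NN_1(WH)$ and the non-triviality of the center guarantees the hypothesis of Lemma \ref{nilpotent}. That lemma gives a $WH$-equivariant homotopy equivalence $|\FF_1(WH)|\simeq_{WH}|\FF_H(\Gamma)|$, hence a $WH$-quasi-isomorphism $\Sigma\widetilde\FF_1(WH)_\bullet\simeq\Sigma\widetilde\FF_H(\Gamma)_\bullet$. Since $e^{WH}$ is invariant under $WH$-quasi-isomorphism, the coefficient of $[\Gamma/H]$ in $\chi^\Gamma(\egamma)$ equals $e^{WH}(\Sigma\widetilde\FF_1(WH)_\bullet)$, which by the same definition applied to $WH$ is the coefficient of $[WH/1]$ in $\chi^{WH}(\underline{\operatorname{E}}WH)$. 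This proves the first assertion of the lemma. Applying the Quillen--Thevenaz equivalence (Lemma \ref{quillen}) to $WH$ yields the further $WH$-quasi-isomorphism $\Sigma\widetilde\FF_1(WH)_\bullet\simeq\Sigma\widetilde\AA_1(WH)_\bullet$, and so the coefficient at $[\Gamma/H]$ becomes $e^{WH}(\Sigma\widetilde\AA_1(WH)_\bullet)$.

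It remains to show that this coefficient vanishes whenever $H\notin\Omega$. For such an $H$ there is a finite $H<K$ with $N_\Gamma(H)$ and $N_\Gamma(K)$ commensurable. A short intersection argument using that $C_\Gamma(H)$ has index at most $|\operatorname{Aut}(H)|$ in $N_\Gamma(H)$ (and similarly for $K$) shows that the commensurability of the normalizers forces $|C_\Gamma(H):C_\Gamma(K)|<\infty$, so the hypothesis of Lemma \ref{contractible} is satisfied. Inspecting the proof of that lemma, the contraction of $\FF_1(WH)$ is witnessed by a finite subgroup $L\leq S\trianglelefteq N_\Gamma(H)$ with $S/H\in\FF_1(WH)$, and the conical contraction $K/H\mapsto SK/H$ is clearly $WH$-equivariant because $S^g=S$ for every $g\in WH$. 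Hence $\Sigma\widetilde\FF_1(WH)_\bullet$ is $WH$-null-homotopic, so its $WH$-equivariant Euler characteristic vanishes and the contribution at $[\Gamma/H]$ is zero.

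The main subtlety, which I would make sure to address carefully, is the equivariance of the contraction used to kill the bad coefficients: non-equivariant contractibility of $|\FF_1(WH)|$ alone does not imply the vanishing of the $WH$-equivariant Euler characteristic, and one has to exploit the explicit choice $S\trianglelefteq N_\Gamma(H)$ so that $S/H$ is a genuine $WH$-fixed cone vertex. Everything else in the argument reduces to bookkeeping of the two homotopy equivalences (Lemmas \ref{nilpotent} and \ref{quillen}) and standard invariance properties of equivariant Euler characteristics under quasi-isomorphism.
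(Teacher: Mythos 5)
Your argument is correct and is essentially the paper's own proof: the coefficient of $[\Gamma/H]$, which is by definition $e^{WH}(\Sigma\widetilde\FF_{H\bullet})$, is rewritten via Lemmas \ref{nilpotent} and \ref{quillen} as $e^{WH}(\Sigma\widetilde{\AA}_1(WH)_\bullet)$, and Lemma \ref{contractible} (fed by the standard observation that commensurability of $N_\Gamma(H)$ and $N_\Gamma(K)$ plus finiteness of $|N_\Gamma(H):C_\Gamma(H)|$ and $|N_\Gamma(K):C_\Gamma(K)|$ forces $|C_\Gamma(H):C_\Gamma(K)|<\infty$) removes the terms with $H\notin\Omega$. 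The one place you diverge is the closing ``subtlety'', which is not actually an issue: $e^{WH}$ is a number computed from any finite free resolution and is invariant under arbitrary $WH$-quasi-isomorphisms, so ordinary acyclicity of the augmented chain complex of $|\FF_1(WH)|$ already yields $e^{WH}=0$, and the $WH$-equivariance of the conical contraction, while true, is a superfluous strengthening.
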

\begin{proof} By Lemmas \ref{nilpotent} and \ref{quillen} the coefficient of $[\Gamma/H]$ in $\chi^\Gamma(\egamma)$ is
$$e^{WH}(\Sigma\widetilde\FF_{H\bullet})=e^{WH}(\Sigma\widetilde\FF_{1}(WH)_\bullet)=e^{WH}(\Sigma\widetilde\AA_{1}(WH)_\bullet).$$
The fact that the sum can be restricted to $\Omega$ follows from Lemma \ref{contractible}.
\end{proof}

We state the form that the formula of Lemma \ref{eulernilpotent} has in the case when all the finite subgroups are cyclic $p$-groups.

\begin{theorem}\label{eulercyclic} Let $\Gamma$ be a virtually torsion free group of type $\UF$ with all its finite subgroups  cyclic $p$-groups. Then
$$\chi^\Gamma(\egamma)=\sum_{H\in\Omega}\Big(\chi(WH)-\sum_{Q\in\AA_1(WH)/WH}\chi(N_\Gamma(Q)/H)\Big)[\Gamma/H]$$
\end{theorem}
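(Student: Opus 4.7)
The plan is to combine Lemma \ref{eulernilpotent} with a direct computation of the equivariant Euler characteristic $e^{WH}(\Sigma\widetilde\AA_1(WH)_\bullet)$ that exploits the cyclic $p$-group hypothesis to reduce the relevant chain complex to length one.

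First I would observe that since every finite subgroup of $\Gamma$ is a cyclic $p$-group, the correspondence $K \leftrightarrow K/H$ between finite subgroups of $N_\Gamma(H)$ containing $H$ and finite subgroups of $WH$ shows that every finite subgroup of $WH$ is again a cyclic $p$-group. Therefore the nontrivial elementary abelian $p$-subgroups of $WH$ are precisely its subgroups of order $p$, there are no proper inclusions among them, and so the poset $\AA_1(WH)$ is $0$-dimensional. Consequently $\Sigma\widetilde\AA_1(WH)_\bullet$ has length one and, as a chain complex of $WH$-modules, takes the form
$$0 \longrightarrow \bigoplus_{Q \in \AA_1(WH)/WH} \Z\uparrow_{N_{WH}(Q)}^{WH} \longrightarrow \Z \longrightarrow 0,$$
with the trivial module $\Z$ placed in degree $0$, the direct sum in degree $1$, and the differential the augmentation.

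Next, applying property (\ref{decomposition}) (as an alternating sum) together with (\ref{induction}) yields
$$e^{WH}\bigl(\Sigma\widetilde\AA_1(WH)_\bullet\bigr) = e^{WH}(\Z) - \sum_{Q \in \AA_1(WH)/WH} e^{N_{WH}(Q)}(\Z) = \chi(WH) - \sum_{Q} \chi(N_{WH}(Q)).$$
By Lemma \ref{eulernilpotent} this is exactly the coefficient of $[\Gamma/H]$ in $\chi^\Gamma(\egamma)$, and the same lemma tells us the outer sum can be restricted to $H \in \Omega$.

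The last step is to rewrite $N_{WH}(Q)$ as $N_\Gamma(Q)/H$. If we write $Q = S/H$ with $H < S \leq N_\Gamma(H)$ and $[S:H] = p$, then $S$ is itself a finite cyclic $p$-group, so $H$, being the unique subgroup of $S$ of its order, is characteristic in $S$; hence $N_\Gamma(S) \leq N_\Gamma(H)$, and $N_{WH}(Q) = N_{N_\Gamma(H)}(S)/H = N_\Gamma(S)/H$. The main obstacle, if any, is precisely this identification; the rest is straightforward bookkeeping with the elementary properties of equivariant Euler characteristics recorded in Section \ref{euler}.
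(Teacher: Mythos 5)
Your proof is correct and takes essentially the same route as the paper's: both identify $\Sigma\widetilde\AA_1(WH)_\bullet$ as the length-one complex $0\to\bigoplus_{Q}\Z\uparrow_{N_\Gamma(Q)/H}^{WH}\to\Z\to 0$, apply properties (\ref{decomposition}) and (\ref{induction}), and rely on the observation that for cyclic finite subgroups $H<Q$ one has $N_\Gamma(Q)\leq N_\Gamma(H)$ (which you justify, correctly, by $H$ being characteristic in $Q$). Your write-up merely makes explicit the appeal to Lemma \ref{eulernilpotent} and the stabilizer identification that the paper leaves implicit.
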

\begin{proof} Note that the fact that all the finite subgroups of $\Gamma$ are cyclic implies that whenever $H<Q\in\FF$, then $N_\Gamma(Q)\leq N_\Gamma(H)$. It suffices to take into account that the chain complex $\Sigma\widetilde\AA_1(WH)_\bullet$ is 
$$0\to\bigoplus_{Q\in\AA_1(WH)/WH}\Z\uparrow_{N_\Gamma(Q)/H}^{WH
}\to\Z\to 0$$
and use (\ref{decomposition}) and (\ref{induction}).\end{proof}

\begin{remark}{\rm If $H\leq\Gamma$ is a subgroup such that $WH$ has all its finite subgroups cyclic $p$-groups, then the coefficient of $[\Gamma/H]$ in $\chi^\Gamma(\egamma)$ can be computed with an analogous formula.}
\end{remark}

To illustrate the formula of Theorem \ref{eulercyclic} we are going to consider the example of a cyclic finite extension of a right-angled Artin group. Note that if one is interested only in this case, another way to proceed in order to compute $\chi^\Gamma(\egamma)$ is to use the model for $\egamma$ described in Example \ref{artinconj}.

\begin{example} {\rm Let $\Gamma=K\ltimes A_L$ where $K=\langle x\mid x^{p^n}=1\rangle$ is the cyclic group of order $p^n$ acting admissibly and simplicially on a flag complex $L$ and $A_L$ is the associated right-angled Artin group .  The argument quoted near the beginning of Example \ref{artinconj} implies that
 for each $G\leq S\leq \Gamma$ there is only one conjugacy class of finite subgroups $H\leq\Gamma$ with $HG=S$. In particular for any $H\leq\Gamma$ finite, $H$ is conjugate to a subgroup of $K$ thus we may assume $H\leq K$ and then $N_\Gamma(H)=K\ltimes C_G(H)=K\ltimes A_{L^H}$, therefore $WH=K/H\ltimes A_{L^H}$ is a group of the same kind. Denote $K_i=\langle x^{p^i}\rangle$. The formula of Theorem \ref{eulercyclic} and (\ref{finindex}) yield
$$\chi^\Gamma(\egamma)=\chi(A_{L^{K}})[\Gamma/K]+
\sum_{i=1}^n{1\over p^i}\Big(\chi(A_{L^{K_i}})-\chi(A_{L^{K_{i-1}}})\Big)[\Gamma/K_i].$$
Euler characteristics of right-angled Artin groups were computed in \cite{charneydavis} Corollary 2.2.5 where it is proven that $\chi(A_L)=1-\chi(L)$. Therefore
$$\chi^\Gamma(\egamma)=\Big(1-\chi(L^{K})\Big)[\Gamma/K]+
\sum_{i=1}^n{1\over p^i}\Big(\chi(L^{K_{i-1}})-\chi(L^{K_i})\Big)[\Gamma/K_i].$$}

\end{example}

We proceed now to the proof of Theorem B. First, we observe that it follows from \cite{lueck} Theorem 4.2 that a virtually torsion free group $\Gamma$ is of type $\UF$ if and only if  all the Weyl groups $WH$ for $H\in\FF$ are virtually of (ordinary) type $\F$.

\begin{theorem}\label{formulaeuler} Assume that $\Gamma=K\ltimes G$ is of type $\UF$ with $G$ torsion free and $K$ a finite $p$-group. 
Assume moreover that for any $G\leq S\leq\Gamma$ all those finite subgroups $H\leq\Gamma$ with $HG=S$ are $\Gamma$-conjugate. Then the coefficient of $[\Gamma/1]$ in $\chi^\Gamma(\egamma)$ is
$$\sum_{H\in\AA(K)/K}{(-1)^{\text{lg}_p|H|}p^{\Big({\text{lg}_p|H|\atop 2}\Big)}\over|N_K(H)|}\chi(C_G(H))$$
where we adopt the convention that $\Big({0\atop 2}\Big):=\Big({1\atop 2}\Big):=0$
\end{theorem}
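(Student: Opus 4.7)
The plan is to reduce, via Lemma \ref{eulernilpotent}, to computing $e^\Gamma(\Sigma\widetilde{\AA}_1(\Gamma)_\bullet)$ and then to invoke the M\"obius function of the subgroup lattice of an elementary abelian $p$-group. Since $K$ is a $p$-group hence nilpotent, Lemma \ref{eulernilpotent} together with Lemma \ref{quillen} gives that the coefficient of $[\Gamma/1]$ in $\chi^\Gamma(\egamma)$ equals $e^\Gamma(\Sigma\widetilde{\AA}_1(\Gamma)_\bullet)$. The hypothesis, together with the bijection $H\mapsto HG$ between subgroups of $K$ and subgroups of $\Gamma$ containing $G$ (with inverse $S\mapsto S\cap K$, available because $G\cap K=1$), shows that two subgroups of $K$ are $\Gamma$-conjugate precisely when they are $K$-conjugate, and that every finite subgroup of $\Gamma$ is $\Gamma$-conjugate to a subgroup of $K$, unique up to $K$-conjugacy. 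Extending this to chains by first conjugating the top element into $K$, one obtains a bijection between $\Gamma$-orbits of chains in $\AA_1(\Gamma)$ and $K$-orbits of chains in $\AA_1(K)$.

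A direct projection-to-$K$ computation shows that for $P\leq K$ one has $N_\Gamma(P)=N_K(P)\ltimes C_G(P)$; consequently the $\Gamma$-stabilizer of a chain $P_0<\cdots<P_{i-1}$ in $\AA_1(K)$ is $\bigl(\bigcap_j N_K(P_j)\bigr)\ltimes C_G(P_{i-1})$, using that centralizers in $G$ are decreasing along the chain. Combining (\ref{decomposition}), (\ref{induction}) and (\ref{finindex}) yields
\[e^\Gamma(\Sigma\widetilde{\AA}_1(\Gamma)_\bullet)=\frac{\chi(G)}{|K|}+\sum_{i\geq 1}(-1)^i\sum_{[P_0<\cdots<P_{i-1}]}\frac{\chi(C_G(P_{i-1}))}{\bigl|\bigcap_j N_K(P_j)\bigr|},\]
where the inner sum runs over $K$-orbits of length-$i$ chains in $\AA_1(K)$.

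Next I would group the terms by the $K$-orbit of the top element $Q=P_{i-1}$, which is allowed because $\chi(C_G(P))$ depends only on the $K$-conjugacy class of $P$ (conjugation by $k\in K$ carries $C_G(P)$ isomorphically to $C_G(P^k)$). For a fixed $K$-orbit of $Q$, the orbit-counting identity $\sum_{\text{orbits}}1/|\text{stab}|=|\text{set}|/|K|$ lets one rewrite the sum over $K$-orbits of chains ending in that orbit as $\frac{1}{|N_K(Q)|}\sum_{i\geq 1}(-1)^i\,\bigl|\{1<P_0<\cdots<P_{i-2}<Q\}\bigr|$. By Philip Hall's theorem this inner sum is the M\"obius value $\mu_{\AA(Q)}(1,Q)$ of the subgroup lattice of $Q$; classically this equals $(-1)^r p^{\binom{r}{2}}$ (with $r$ the $p$-rank of $Q$), since that lattice is the lattice of subspaces of $\mathbb F_p^r$. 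Gathering everything, including the $H=1$ contribution $\chi(G)/|K|$ which matches the $r=0$ case of the formula (with $\binom{0}{2}=0$), gives precisely the claimed identity. The most delicate point is the combinatorial bookkeeping: carefully distinguishing between individual chains, $K$-orbits of chains, and $\Gamma$-orbits of chains, and matching the orbit-counting factor against the inverse stabilizer order so that the $1/|N_K(Q)|$ and the sign from Hall's theorem both come out correctly.
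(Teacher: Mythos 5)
Your proposal is correct and follows essentially the same route as the paper: reduce via Lemma \ref{eulernilpotent} to $e^\Gamma(\Sigma\widetilde{\AA}_{1\bullet})$, use the hypothesis (plus the fact that $N_\Gamma(P)=N_K(P)\ltimes C_G(P)$ for $P\leq K$) to index everything by $K$-classes of elementary abelian subgroups of $K$, decompose by the top subgroup of each chain, and recognize the resulting alternating chain count as $(-1)^r p^{\binom{r}{2}}$. The only cosmetic difference is that the paper packages the chains with fixed top $H$ as the induced complex $\widetilde{\mathcal{S}}(H)_\bullet\uparrow_{N_\Gamma(H)}^\Gamma$ and cites Stanley for its Euler characteristic, whereas you do the orbit-and-stabilizer bookkeeping by hand and invoke Hall's theorem for the M\"obius function --- the same fact.
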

\begin{proof} By Lemma \ref{nilpotent}, the coefficient of $[\Gamma/1]$ in $\chi^\Gamma(\egamma)$ is 
$e^\Gamma(\Sigma\widetilde{\mathcal{A}}_{1\bullet}).$ 
 We may decompose each degree of  the complex of $\Sigma\widetilde{\mathcal{A}}_{1\bullet}$ as a sum of modules 
 according to the $\Gamma$-conjugacy class of the top subgroup of each chain. 
 The hypothesis implies that we may choose as representatives of the $\Gamma$-conjugacy classes of finite elementary abelian subgroups of $\Gamma$ just a family of representatives of the $K$-conjugacy classes of elementary abelian subgroups of $K$. 
Let us denote by $\mathcal{S}(H)$  the poset of proper subgroups of the $p$-elementary abelian group $H$ and by $\widetilde{\mathcal{S}}(H)_\bullet$ the corresponding augmented chain complex. Note that a chain $\sigma:K_0<\ldots K_{i-1}<K_i=H$ in $\AA_1$ shows on in $\Sigma\widetilde{\mathcal{A}}_{1\bullet}$ precisely at degree $i+1$ and corresponds to a degree $i-1$ chain in $\widetilde{\mathcal{S}}(H)_\bullet$ (we are not suspending this complex, just augmenting it). Note also that each such chain yields exactly $|\Gamma:N_\Gamma(H)|$ chains of the original complex with top subgroup lying in the same conjugacy class. Also, for any $i$, $\widetilde{\mathcal{S}}(H)_i\uparrow_{N_\Gamma(H)}^\Gamma\downarrow_G$ is a sum of $G$-modules which are $\Gamma$-conjugated to
$\widetilde{\mathcal{S}}(H)_i\uparrow_{C_G(H)}^G$. As $C_G(H)$ is of type $F$ and acts trivially on $\widetilde{\mathcal{S}}(H)_i$, we see that this module has a finite $C_G(H)$-free resolution. 
Therefore

$$\begin{aligned}
e^\Gamma(\Sigma\widetilde{\mathcal{A}}_{1\bullet})&\underset{(\ref{decomposition})}{=}e^\Gamma(\Z)+\sum_{H\in\AA_1(K)/K}e^\Gamma(\widetilde{\mathcal{S}}(H)_\bullet\uparrow_{N_\Gamma(H)}^\Gamma)\\
&\underset{(\ref{induction}),(\ref{finindex})}{=}{1\over|K|}\chi(G)+\sum_{H\in\AA_1(K)/K}{1\over|N_K(H)|}e^{C_G(H)}(\widetilde{\mathcal{S}}(H)_\bullet)\\
&={1\over|K|}\chi(G)+\sum_{H\in\AA_1(K)/K}{e(\widetilde{\mathcal{S}}(H)_\bullet)\over|N_K(H)|}\chi(C_G(H))\\
\end{aligned}$$
where 
in the first equality we are applying (\ref{decomposition}) twice: one to decompose $\Sigma\widetilde{\mathcal{A}}_{1\bullet}$ in terms of the modules $\widetilde{\mathcal{S}}(H)_i\uparrow_{N_\Gamma(H)}^\Gamma$ and a second one to recover the complex $\widetilde{\mathcal{S}}(H)_\bullet\uparrow_{N_\Gamma(H)}^\Gamma$.

If $|H|=p$, $\mathcal S(H)=\emptyset$ thus  $e(\widetilde {\mathcal S}(H))=-1$. And if
 $|H|=p^n$ for $n\geq 2$ this Euler characteristic is well known to be (see \cite{stanley} Example 3.10.2)
$$e(\widetilde{\mathcal{S}}(H)_\bullet)=(-1)^np^{\big({n\atop 2}\big)}.$$
From this we get the result.
\end{proof}



\begin{corollary}\label{eulerraag} Let $K\neq 1$ be a finite $p$-group acting admissibly and simplicially on the flag complex $L$ and $\Gamma=K\ltimes A_L$. Then the coefficient of $[\Gamma/1]$ in $\chi^\Gamma(\egamma)$ is
$$-\sum_{H\in\AA(K)/K}{(-1)^{\text{lg}_p|H|}p^{\Big({\text{lg}_p|H|\atop 2}\Big)}\over|N_K(H)|}\chi(L^H)$$
\end{corollary}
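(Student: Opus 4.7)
The plan is to derive this as a direct specialization of Theorem \ref{formulaeuler} to $G = A_L$. First I would verify the hypotheses: $A_L$ is torsion free, $K$ is a finite $p$-group by assumption, the single-$\Gamma$-conjugacy-class condition on finite subgroups $H\le\Gamma$ with $HA_L=S$ was established in Example \ref{artinconj} via the $\text{CAT}(0)$ Salvetti cube complex, and that same model furnishes a cocompact $\egamma$, so $\Gamma$ is of type $\UF$. Theorem \ref{formulaeuler} then gives that the coefficient of $[\Gamma/1]$ in $\chi^\Gamma(\egamma)$ equals
$$\sum_{H \in \AA(K)/K} w_H\,\chi\bigl(C_{A_L}(H)\bigr), \qquad w_H := \frac{(-1)^{\text{lg}_p|H|}\,p^{\binom{\text{lg}_p|H|}{2}}}{|N_K(H)|}.$$

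Next, two results recalled earlier in the paper do the main rewriting. Crisp's theorem, applied to the subgroup $H \le K$ (which, being contained in $K$, also acts admissibly and simplicially on $L$), gives $C_{A_L}(H) = A_{L^H}$ as quoted in Example \ref{artinconj}; and the Charney--Davis formula $\chi(A_{L^H}) = 1 - \chi(L^H)$ from \cite{charneydavis} Corollary 2.2.5 (already used in the worked example preceding this corollary) converts these Euler characteristics. After substitution the coefficient becomes
$$\Bigl(\sum_{H \in \AA(K)/K} w_H\Bigr) \;-\; \sum_{H \in \AA(K)/K} w_H\,\chi(L^H).$$

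The only non-routine point -- and the main obstacle -- is to see that the constant $\sum_{H\in\AA(K)/K} w_H$ vanishes whenever $K\ne 1$, so that only the negative of the second sum survives and produces the leading minus sign. My plan is to obtain this identity for free by reapplying Theorem \ref{formulaeuler} to the degenerate semidirect product $\Gamma = K \ltimes 1$, whose hypotheses hold trivially: its right-hand side collapses to $\sum_H w_H \cdot \chi(1) = \sum_H w_H$, while its left-hand side is the $[K/1]$-coefficient of $\chi^K(\underline{\operatorname{E}}K)$. Since $\underline{\operatorname{E}}K$ is a single point, a direct evaluation from the paper's definition of $\chi^\Gamma(\egamma)$ (using that $|\FF_1(K)|$ is $K$-contractible via the central subgroup $\Omega_1(Z(K))$, which forces $e^K(\Sigma\widetilde{\FF}_{1\bullet}) = 0$) shows this coefficient is zero precisely because $K \ne 1$, and the corollary follows.
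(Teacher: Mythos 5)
Your proposal is correct, and its overall structure coincides with the paper's: specialize Theorem \ref{formulaeuler} to $G=A_L$ (the hypotheses being supplied by Example \ref{artinconj}), rewrite $\chi(C_{A_L}(H))=\chi(A_{L^H})=1-\chi(L^H)$ via Crisp's theorem and the Charney--Davis formula, and reduce everything to the vanishing of the constant $\sum_{H\in\AA(K)/K}w_H$. The only place you genuinely diverge is in proving that identity. The paper does it by a direct combinatorial count: grouping the chains of $\AA_1(K)$ by the $K$-conjugacy class of their top subgroup yields $\sum_{H\in\AA_1(K)/K}|K:N_K(H)|\,e(\widetilde{\mathcal{S}}(H)_\bullet)=\pm e(\AA_1(K)_\bullet)$, and Quillen's theorem that $|\AA_1(K)|$ is conically contractible for a non-trivial $p$-group evaluates the right-hand side. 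You instead bootstrap Theorem \ref{formulaeuler} itself by applying it to the degenerate extension $K\ltimes 1$: its hypotheses hold trivially there, the right-hand side of the formula collapses to $\sum_H w_H$ since $\chi(1)=1$, and the left-hand side is $e^{K}(\Sigma\widetilde\FF_1(K)_\bullet)=0$ because $|\FF_1(K)|$ is conically contractible via the centre, so the suspended augmented complex is quasi-isomorphic to zero. This is legitimate and arguably tidier; note, though, that unwinding the proof of Theorem \ref{formulaeuler} in that degenerate case reproduces exactly the chain-counting the paper performs directly, and both vanishing arguments ultimately rest on the same contractibility phenomenon for subgroup posets of non-trivial $p$-groups. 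Your route saves repeating the combinatorics at the cost of one extra (easily checked) application of the theorem.
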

\begin{proof} By the beginning of Example \ref{artinconj}, $\Gamma$ (and all its Weyl groups) satisfies the hypothesis of Theorem \ref{formulaeuler}. Moreover, by \cite{charneydavis} Corollary 2.2.5,  $\chi(A_{L^H})=1-\chi(L^H)$ for any $H\leq\Gamma$ finite. Therefore the Corollary follows from Theorem \ref{formulaeuler} once we have proven 
$$0=\sum_{H\in\AA(K)/K}{(-1)^{\text{lg}_p|H|}p^{\Big({\text{lg}_p|H|\atop 2}\Big)}\over|N_K(H)|}=\sum_{H\in\AA_1(K)/K}{e(\widetilde{\mathcal{S}}(H)_\bullet)\over|N_K(H)|}+{1\over|K|}.
$$
But observe that
$$\sum_{H\in\AA_1(K)/K}|K:N_K(H)|{e(\widetilde{\mathcal{S}}(H)_\bullet)}=e(\AA_1(K)_\bullet)=1$$
since $|\AA_1(K)|$ is contractible by a well known result of Quillen (in fact it is conically contractible, see \cite{Ben2})
\end{proof}

\begin{example} {\rm Let $K=C_p\times C_p$ act admissibly and simplicially on a flag complex $L$ and put $\Gamma=K\ltimes A_L$. Then the coefficient of $[\Gamma/1]$ in 
$\chi^\Gamma(\egamma)$ is
$$-{1\over p^2}\chi(L)+{1\over p^2}\sum_{1<H<K}\chi({L^H})-{1\over p}\chi({L^K}).$$}
\end{example}

If the hypothesis in Theorem \ref{formulaeuler} are inherited by centralizers, then the formula together with Lemma \ref{eulernilpotent} allows to compute the whole Euler class $\chi^\Gamma(\egamma)$. We do it in the next example:

\begin{example} {\rm Let $K=\langle x,y\mid x^2=y^4=1,y^x=y^{-1} \rangle$ be a dihedral group acting admissibly and simplicially on a flag complex $L$ and put $\Gamma=K\ltimes A_L$. Then the coefficient of $[\Gamma/1]$ in 
$\chi^\Gamma(\egamma)$ is
$$-{1\over 8}\chi(L)+{1\over 4}\sum_{H\in\Omega_1}\chi({L^H})
+{1\over 8}\chi({L^{\langle y^2\rangle}})-{1\over 4}\sum_{H\in\Omega_2}\chi({L^H})$$
where $\Omega_1=\{\langle x\rangle,\langle xy\rangle\}$ and $\Omega_2=\{\langle x, y^2\rangle,\langle xy,y^2\rangle\}$.

For $H\in\Omega_1\cup\Omega_2\cup\{\langle y\rangle\}$, $|N_K(H)/H|=2$ and the coefficient of $[\Gamma/H]$ is
$$-{1\over 2}\chi({L^H})+{1\over 2}\chi({L^{N_K(H)}}).$$

For $H=\langle y^2\rangle$, $N_K(H)=K$, $N_K(H)/H\cong C_2\times C_2$ and the coefficient of $[\Gamma/H]$ is
$$-{1\over 4}\chi({L^H})+{1\over 4}\sum_{S\in\Omega_2\cup\{\langle y\rangle\}}\chi({L^{S}})-{1\over 2}\chi({L^K}).$$
 
 Finally, the coefficient of $[\Gamma/K]$ is $1-\chi({L^K})$.}
\end{example}

\section{Duality and Bredon cohomological dimensions.}\label{poincareduality}

\noindent Example \ref{sharpbound} shows that a finite extension of prime index of a torsion free group of type $\FP_\infty$ might not be of type $\FP_\infty$. 
However, Hamilton (\cite{Hamm} Theorem E) has shown that this phenomenon can not happen if we work with coefficients in a field of prime characteristic  equal to the index of the extension. We are going to use Hamilton's techniques to further explore the properties of Bredon cohomology in characteristic $p$  of groups $\Gamma$ which are a $p$-power index extension of a duality or Poincar\'e duality torsion free group. The notion of Poincar\'e duality has been generalized to Bredon cohomology by Davis and Leary  and we may also generalize the notion of a duality group in the same spirit.

\begin{definition}\label{bredonPD}(\cite{davisleary})  {\rm A group $\Gamma$ is a Bredon duality group over $R$ if it is of type $\UFP_\infty$ for $R$, $\underline{\cd}_R\Gamma<\infty$ and for any finite subgroup $H$ there is an integer $n_H$ such that 
$$\text{H}^i(WH,RWH)=\Bigg\{
\begin{aligned}&\text{is $R$-flat if }i=n_H\\
&=0\text{ in other case.}\\
\end{aligned}$$
If $\Gamma$ has the same property with the extra requirement that for any $H\leq\Gamma$ finite, $\text{H}^{n_H}(WH,RWH)=R$, then we say that $\Gamma$
is a Bredon Poincar\'e duality group over $R$.}
\end{definition}

Note that for torsion free groups, this reduces to ordinary duality or Poincar\'e duality. Recall that if $G$ is a torsion free duality group, the integer $n:=n_1$ is called the dimension of $G$ and $n=\cd_RG$. We will  denote $A_R(G):=\text{H}^n(G,RG)$, this is called the dualizing module. 

We fix now a field $F$ of characteristic $p$.

\begin{proposition}\label{tec1} Let $G$ be a duality group over $F$ of dimension $n$, $\Gamma=Q\ltimes G$ with $Q=C_p$ and $U=F_Q\uparrow^\Gamma$.  Then for $m>1,$ 
$${\text{H}}^m(\Gamma/G,\text{H}^n(G,U))=\bigoplus_{H\in\FF_1/\Gamma}\bigoplus_{i=0}^n\text{H}^i(C_G(H),FC_G(H)).$$
\end{proposition}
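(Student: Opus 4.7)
The plan is to (i) degenerate the Lyndon--Hochschild--Serre spectral sequence to turn the left hand side into ordinary $\Gamma$-cohomology of $U$, and then (ii) analyze this ordinary cohomology via a Mackey/Bredon decomposition over the finite-subgroup conjugacy classes.

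First I would observe that $\Gamma=GQ$ yields a $G$-equivariant bijection $\Gamma/Q\cong G$ of left $G$-sets (the unique coset representatives lie in $G$), so $U|_G=F[\Gamma/Q]|_G\cong FG$ as left $FG$-modules, with the residual $Q$-action being the permutation action induced by conjugation of $Q$ on $G$. Since $G$ is duality over $F$ of dimension $n$, the $G$-cohomology $H^i(G,U)=H^i(G,FG)$ vanishes for $i\ne n$ and equals the $F$-flat dualizing module $A:=A_F(G)$ for $i=n$. Feeding this into the Lyndon--Hochschild--Serre spectral sequence
\[ E_2^{p,q}=H^p(Q,H^q(G,U))\;\Longrightarrow\;H^{p+q}(\Gamma,U) \]
leaves only the $n$-th row nonzero, forcing collapse at $E_2$ and giving canonical isomorphisms $H^m(Q,H^n(G,U))\cong H^{m+n}(\Gamma,U)$ for every $m\geq 0$. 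So it suffices to identify the right hand side with $H^{m+n}(\Gamma,U)$ when $m>1$.

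Since $Q=C_p$ acts in characteristic $p$, positive-degree group cohomology coincides with Tate cohomology, which is $2$-periodic. Decomposing $A$ into indecomposable $FQ$-modules $M_k$ of $F$-dimension $k\in\{1,\ldots,p\}$, a direct calculation (using that $N=(g-1)^{p-1}$ in $FQ$) shows $\widehat H^m(Q,M_k)\cong F$ in every degree when $k<p$ and $\widehat H^m(Q,M_k)=0$ when $k=p$. Hence, for every $m>1$, $H^m(Q,A)\cong F^{(r)}$, where $r$ is the number of non-free indecomposable $FQ$-summands of $A$; this is what accounts for the $m$-independence of the right hand side. To match the stated direct sum, I would adapt Hamilton's technique from \cite{Hamm}: take a cocompact model of $\egamma$, split its equivariant cellular chain complex into the free and singular loci, and identify the block indexed by each $[H]\in\FF_1/\Gamma$. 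The Mackey double-coset formula for $U=\IND_Q^\Gamma F$ restricted to the stabilizer of that block, combined with Shapiro-type manipulations and the duality of $G$ transferred to $C_G(H)$, would produce the summand $\bigoplus_{i=0}^n H^i(C_G(H),FC_G(H))$.

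The main obstacle is this last identification: the Mackey/Bredon bookkeeping needed to exhibit $H^{m+n}(\Gamma,U)$ as the centralizer-indexed direct sum, and in particular the verification that the centralizer cohomology shows up in every degree $0,1,\ldots,n$, reflecting the duality-induced degree shift by $n$ from $G$ down to each $C_G(H)$. Once that decomposition is in place, the $m$-independence for $m>1$ is forced automatically by the Tate periodicity established above.
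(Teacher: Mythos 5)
Your step (i) — restricting $U$ to $G$ to see $FG$, collapsing the Lyndon--Hochschild--Serre spectral sequence, and reducing to $\Ho^{m+n}(\Gamma,U)$ — is exactly the paper's first move. But the heart of the proposition is the identification $\Ho^{m+n}(\Gamma,U)\cong\bigoplus_{H\in\FF_1/\Gamma}\Ho^{m+n}(N_\Gamma(H),U)$, and this is precisely the point you defer to ``adapt Hamilton's technique'' with a cocompact model of $\egamma$ and unspecified ``Mackey/Bredon bookkeeping''. That is a genuine gap: no cocompact (or even finite-type) model is available under the stated hypotheses ($G$ is only assumed to be a duality group over $F$), and you never produce the mechanism that makes the decomposition work. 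The paper's mechanism is algebraic and short: since every nontrivial finite subgroup of $\Gamma$ has order $p$, Lemma \ref{cyclic} gives a short exact sequence $0\to M\to\bigoplus_{H\in\FF_1/\Gamma}F\uparrow^\Gamma_{N_\Gamma(H)}\to F\to 0$ with $\pd_{F\Gamma}M\le\pd_{F\Gamma}B(F\Gamma)=n$; applying $\EXT^*_{F\Gamma}(-,U)$ and using $\ext{j}{F\Gamma}{M}{U}=0$ for $j>n$ yields the isomorphism in all degrees $\ge n+2$ — which is exactly where the hypothesis $m>1$ is used, something your write-up never accounts for.

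Two further points. First, your closing heuristic that the summand $\bigoplus_{i=0}^n\Ho^i(C_G(H),FC_G(H))$ reflects ``the duality of $G$ transferred to $C_G(H)$'' is off the mark: the centralizers $C_G(H)$ need not be duality groups (Section \ref{dualityext} of the paper is built on an example where they are not). In the paper the range $i=0,\dots,n$ comes from the Mackey decomposition $U\downarrow_{N_\Gamma(H)}=U_H\oplus(\text{free})$, the vanishing of the free part above $\vcd$, and the K\"unneth formula for $N_\Gamma(H)=H\times C_G(H)$ with $\Ho^j(H,F)=F$ in every degree $j\ge0$ together with $\cd_FC_G(H)\le n$; duality of $G$ is used only to collapse the spectral sequence and to bound $\pd_{F\Gamma}M$. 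Second, your Tate-periodicity computation of $\Ho^m(Q,A)$ as $F^{(r)}$ is correct but does no work toward identifying the right-hand side; it explains $m$-independence, which in the paper's argument is automatic once the degree-$(m+n)$ isomorphisms are in place.
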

\begin{proof}  Observe first that as $U\downarrow_G\cong RG$, the fact that $G$ is $R$-duality implies $\text{H}^j(G,U)=0$ unless $j=n$ so the Lyndon-Hochshild-Serre spectral sequence associated to the group extension $1\to G\to\Gamma\to Q\to 1$ yields
$$\text{H}^{m+n}(\Gamma, U)=\text{H}^m(\Gamma/G,\text{H}^n(G,U)).$$
Consider now the short exact sequence of Lemma \ref{cyclic}
$$0\to M\to \bigoplus_{H\in\FF_1/\Gamma}F\uparrow^\Gamma_{N_\Gamma(H)} \to F\to 0$$
and recall that $\pd_{R\Gamma} M\leq n$. From the long exact sequence associated to $\text{Ext}^*_\Gamma(-,U)$ we get
an isomorphism
$$\text{H}^{m+n}(\Gamma, U)\cong\bigoplus_{H\in\FF_1/\Gamma}\text{H}^{m+n}(N_\Gamma(H),U).$$
For any $H\in\FF_1$, the Mackey formula together with the fact that since $N_\Gamma(H)=H\times C_G(H)$, $H$ is the only finite subgroup of $N_\Gamma(H)$ imply that 
$$U\downarrow_{N_\Gamma(H)}=U_H\oplus V$$
where $U_H:=F_H\uparrow^{N_\Gamma(H)}$ and $V$ is a free $N_\Gamma(H)$-module. Then, as $m+n>n\geq\vcd N_\Gamma(H)$,
$$\text{H}^{m+n}(N_\Gamma(H),U)=\text{H}^{m+n}(N_\Gamma(H),U_H).$$
As we are working with coefficients in a field $F$ and $H=C_p$, we have $\text{H}^i(H,F)=F$. Then, using the K\"unneth Theorem exactly as in the proof of \cite{Hamm} Proposition 2.4 we get
$$\text{H}^{m+n}(N_\Gamma(H),U_H)=\bigoplus_{i=0}^n\text{H}^i(C_G(H),U_H)=\bigoplus_{i=0}^n\text{H}^i(C_G(H),FC_G(H)).$$

\end{proof}

Note that with the notation of Proposition \ref{tec1}, we may see $A_F(G)=\text{H}^n(G,U)$ as a $Q$-module and then
$\text{H}^m(Q,A_F(G))={\text{H}}^m(\Gamma/G,\text{H}^n(G,U)).$

\begin{corollary}\label{dualizingmod} If $\Gamma=Q\ltimes G$ with $Q=C_p$ is Bredon duality over $F$, then for $m>1,$  $$\text{H}^m(Q,A_F(G))=\bigoplus_{H\in\FF_1/\Gamma}A_F(C_G(H)).$$
\end{corollary}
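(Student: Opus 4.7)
The plan is to apply Proposition \ref{tec1} directly and then invoke the Bredon duality hypothesis to collapse the inner direct sum. First I would identify the left-hand side of the claim with the left-hand side of Proposition \ref{tec1}: since $U\downarrow_G\cong FG$, we have $A_F(G)=\text{H}^n(G,U)$ as $Q$-modules (the $Q$-action coming from the ambient $\Gamma$-action on $U$ via the quotient $\Gamma/G\cong Q$), so $\text{H}^m(Q,A_F(G))=\text{H}^m(\Gamma/G,\text{H}^n(G,U))$, which is exactly the quantity computed in Proposition \ref{tec1}. For $m>1$ that proposition already gives
$$\text{H}^m(Q,A_F(G))=\bigoplus_{H\in\FF_1/\Gamma}\bigoplus_{i=0}^{n}\text{H}^i(C_G(H),FC_G(H)).$$

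Next I would use the Bredon duality hypothesis on $\Gamma$ to collapse the inner sum. For each $H\in\FF_1$, the Weyl group $WH=N_\Gamma(H)/H$ satisfies $N_\Gamma(H)=H\times C_G(H)$ (as observed in the proof of Proposition \ref{tec1}, using that $H$ is a finite subgroup of $\Gamma=Q\ltimes G$ of order $p$), so $WH\cong C_G(H)$, and this is torsion-free since it sits inside $G$. The Bredon duality condition then says exactly that $\text{H}^i(C_G(H),FC_G(H))$ vanishes for $i\neq n_H$ and is $F$-flat (hence just some $F$-module) for $i=n_H$; in other words, $C_G(H)$ is an ordinary duality group over $F$ of dimension $n_H$, with dualizing module $A_F(C_G(H))=\text{H}^{n_H}(C_G(H),FC_G(H))$.

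Consequently, in each inner sum only the single term $i=n_H$ survives and it equals $A_F(C_G(H))$, yielding the desired identity. There is essentially no obstacle here beyond unwinding definitions: the corollary is a direct specialization of Proposition \ref{tec1} to the Bredon-duality setting, the only point worth flagging being the compatibility of the $\Gamma$-action on $U$ with the $Q$-module structure on $A_F(G)$ used to identify $\text{H}^m(Q,A_F(G))$ with $\text{H}^m(\Gamma/G,\text{H}^n(G,U))$.
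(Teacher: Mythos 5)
Your argument is correct and is exactly the paper's (implicit) proof: the paper derives the corollary directly from Proposition \ref{tec1} via the remark identifying $\text{H}^m(Q,A_F(G))$ with $\text{H}^m(\Gamma/G,\text{H}^n(G,U))$, and then the Bredon duality hypothesis collapses the inner sum $\bigoplus_{i=0}^n\text{H}^i(C_G(H),FC_G(H))$ to the single term $A_F(C_G(H))$ since $WH\cong C_G(H)$ for $H\in\FF_1$. The only point you could add for completeness is that the hypothesis of Proposition \ref{tec1} (that $G$ itself is duality over $F$) also follows from the Bredon duality of $\Gamma$ applied to $H=1$, since $\text{H}^*(\Gamma,F\Gamma)\cong\text{H}^*(G,FG)$ for the finite index subgroup $G$.
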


We may now prove the analogous of Hamilton's result for Poincar\'e duality groups, which is the first part of Theorem C. As stated in the Introduction, this result generalizes Corollary 2.1 in \cite{farrelllafont}.

 \begin{theorem}\label{poincaredualityext} Let $G$ be a Poincar\'e duality group over $F$ and $\Gamma$ a finite index extension of $G$ such that $|\Gamma/G|$ is a $p$-group. Then $\Gamma$ is Bredon Poincar\'e duality over $F$ and $\underline{\cd}_F\Gamma=\cd_FG$.
 \end{theorem}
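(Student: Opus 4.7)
The strategy is to reduce the Bredon Poincar\'e duality property of $\Gamma$ to ordinary Poincar\'e duality of each centralizer $C_G(H)$, and then establish the latter by a Smith-theoretic fixed-point argument on a suitable finite $FG$-resolution.

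\textbf{Setup.} Every finite subgroup of $\Gamma$ embeds into the finite $p$-group $\Gamma/G$, hence is a $p$-group. Since $G$ is PD over $F$ it is in particular of type $\FP$ over $F$, so Hamilton's theorem \cite{Hamm} gives that $\Gamma$ is of type $\FP_\infty$ over $F$; combined with \cite{Brown} IX Lemma 13.2 (recalled right after Definition \ref{bredonfpinfty}) this produces finitely many $\Gamma$-conjugacy classes of finite subgroups. For any finite $H\leq\Gamma$, the natural map $N_G(H)/C_G(H)\hookrightarrow\operatorname{Aut}(H)$ has finite image, so $G$ being torsion-free forces $N_G(H)=C_G(H)$. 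Consequently $WH=N_\Gamma(H)/H$ contains $C_G(H)$ as a normal torsion-free subgroup of $p$-power index dividing $|H|\cdot|\Gamma/G|$.

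\textbf{Centralisers are Poincar\'e duality.} The heart of the argument is to show that each $C_G(H)$ is PD over $F$ of some dimension $n_H\leq n$. I would proceed as follows. Choose a finite $FG$-free resolution $P_\bullet\to F$ of length $n$ (available since $G$ is $\FP$ over $F$) and extend the action so that the resolution is $F[HG]$-equivariant, where $HG=H\ltimes G\leq\Gamma$. Since $P_\bullet$ is $F$-acyclic and $H$ is a finite $p$-group acting on $F$-vector spaces in characteristic $p$, Smith's theorem implies that the fixed-point subcomplex $P_\bullet^{H}$ is $F$-acyclic; as $C_G(H)$ acts freely on $P_\bullet^{H}$, this yields a finite $FC_G(H)$-projective resolution of $F$, so $C_G(H)$ is $\FP$ over $F$ with $\cd_F C_G(H)=n_H\leq n$. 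The Poincar\'e duality $\operatorname{Hom}_{FG}(P_\bullet,FG)\simeq P_\bullet[n]$ of $G$ passes to $H$-fixed points and gives the self-duality statement $H^{i}(C_G(H),FC_G(H))=F$ for $i=n_H$ and $0$ otherwise. The dualising $F$-line carries trivial action because the orientation character $\Gamma/G\to F^{\times}$ of a $p$-group into $F^{\times}$ in characteristic $p$ must be trivial.

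\textbf{Conclusion.} Given that $C_G(H)$ is PD over $F$ of dimension $n_H$, Hamilton's theorem applied to the $p$-power index extension $C_G(H)\leq WH$ gives that $WH$ is of type $\FP_\infty$ over $F$, so $\Gamma$ is $\UFP_\infty$ over $F$. Eckmann--Shapiro for the finite-index inclusion $C_G(H)\leq WH$ (so that $FWH=\operatorname{Coind}_{C_G(H)}^{WH}FC_G(H)$) then yields
\[
H^{i}(WH,FWH)\cong H^{i}(C_G(H),FC_G(H))=\begin{cases}F & i=n_H,\\ 0 & i\neq n_H,\end{cases}
\]
which is precisely the Bredon Poincar\'e duality condition of Definition \ref{bredonPD}. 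The Bredon dimension is maximal at $H=1$, where $n_1=\cd_F G=n$, so $\underline{\cd}_F\Gamma=n=\cd_F G$.

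\textbf{Expected obstacle.} The delicate step is Paragraph 2: making the resolution $HG$-equivariant and propagating self-duality through the fixed-point operation in a way that respects the correct degree shift. A cleaner alternative would be an inductive argument on the composition length of $\Gamma/G$, handling the $C_p$-quotient steps via Proposition \ref{tec1} together with the long exact sequence of Lemma \ref{cyclic}, bootstrapping from the torsion-free base case $G$ up through a chief series of $\Gamma/G$.
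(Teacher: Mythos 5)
Your overall architecture (show each centralizer $C_G(H)$ is $\FP$ and Poincar\'e duality over $F$, then transfer to $WH$ by Shapiro) matches the paper's, but the step you yourself flag as delicate is a genuine gap, and it is precisely the step that Proposition \ref{tec1} exists to supply. Taking naive $H$-fixed points of an $F$-acyclic complex of modules is not a valid substitute for Smith theory: Smith theory is a statement about spaces (algebraically, about complexes of $p$-permutation modules via the Brauer quotient, not the fixed-point functor); the terms of an $FG$-free resolution endowed with an extended $HG$-action need not restrict to $H$ as permutation modules; and even granting $F$-acyclicity of $P_\bullet^{H}$, there is no argument that $\Hom_{FG}(P_\bullet,FG)\simeq P_\bullet[n]$ ``passes to fixed points'' with the correct shift --- that assertion \emph{is} the theorem. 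The paper instead reduces by induction (as in Hamilton's Theorem 6.4) to $\Gamma=Q\ltimes G$ with $Q=C_p$ and reads the duality of the centralizers off Proposition \ref{tec1}: since $G$ is Poincar\'e duality, $A_F(G)=F$ with trivial $Q$-action (for the reason you give), so $\mathrm{H}^m(Q,A_F(G))=F$ for every $m>1$, and the identity of Proposition \ref{tec1} forces $\bigoplus_{H\in\FF_1/\Gamma}\bigoplus_{i=0}^n\mathrm{H}^i(C_G(H),FC_G(H))=F$: one conjugacy class, one nonvanishing degree, one-dimensional. That computation is the ``algebraic Smith theory'' you are reaching for, and your second paragraph does not reproduce it.

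The second gap is the final sentence. The inequality $\underline{\cd}_F\Gamma\ge\cd_FG$ is standard, but the upper bound is not ``the Bredon dimension is maximal at $H=1$'': Definition \ref{bredonPD} also requires $\underline{\cd}_F\Gamma<\infty$, and the paper's own Example \ref{sharpbound} exhibits a split extension by $C_p$ of an $\FP_\infty$ group with $\underline{\cd}\Gamma=\vcd\Gamma+1$, so no formal argument from the values $n_H$ alone can give $\underline{\cd}_F\Gamma\le n$. The paper obtains the upper bound from Theorem \ref{condition} applied with $l(H)=\mathrm{hd}_FC_G(H)$, which is an admissible order-reversing function precisely because of Bieri's Proposition 9.22: an infinite-index subgroup of a Poincar\'e duality group has strictly smaller homological dimension. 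Some input of this kind is indispensable and nothing in your proposal supplies it. (A minor further point: $N_G(H)=C_G(H)$ does not follow from torsion-freeness of $G$ --- the quotient $N_G(H)/C_G(H)$ embeds in $\operatorname{Aut}(H)$ but is a quotient, not a subgroup, of a subgroup of $G$; this only costs a finite index and Shapiro absorbs it, but the stated justification is incorrect.)
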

 \begin{proof} We proceed by induction exactly as in the proof of \cite{Hamm} Theorem 6.4. Then one can reduce to the case when $\Gamma=Q\ltimes G$ with $Q=C_p$.
 The fact that $\Gamma$ is Bredon Poincar\'e duality follows from Proposition \ref{tec1}. Using \cite{Bieribook} Proposition 9.22 we see that every subgroup of a Poincar\'e duality group has either finite index or it is of homological dimension strictly smaller than that of the original group. This means that we may apply Theorem \ref{condition} with $l(H):=\text{hd}_RC_G(H)$ thus $\underline\cd_F\Gamma\leq\text{hd}_FG=\text{cd}_FG$ (recall that $G$ is of type $\FP_\infty$ over $F$ thus $\text{hd}_FG=\cd_FG$ and the same holds for any $C_G(H)$).

  \end{proof}

 We consider now the Question D of the Introduction. 

\begin{examples} {\rm
\noindent i) Leary-Nucinkis (\cite{learynucinkis}) have examples where $\Gamma$ is $\FP_\infty$ and not  $\UFP_\infty$. In Section \ref{smallrank} we have constructed an example with the same property and such that $|\Gamma:G|$ is a prime. But by \cite{Hamm} this can not happen in characteristic $p$ if $\Gamma/G$ is a $p$-group. 

\noindent ii) Davis-Leary (\cite{davisleary}) have examples where $G$ is Poincar\'e duality but $\Gamma$ is not even $\UFP_\infty$ and also examples where $G$ is again Poincar\'e duality and $\Gamma$ is $\UFP_\infty$ but not Bredon Poincar\'e duality. 
And in   \cite{farrelllafont} Theorem 2.2 and the Remark after that Theorem, Farrell-Lafont  construct examples where $G$ is $\delta$-hyperbolic and Poincar\'e duality and $|\Gamma:G|$ is a prime but $\Gamma$ is not Bredon Poincar\'e duality and also examples such that $|\Gamma:G|$ is not a prime and $\Gamma$ is not Bredon Poincar\'e duality over any PID. 
Again, there is a positive result in characteristic $p$ if $\Gamma/G$ is a $p$-group (Theorem \ref{poincaredualityext}). 

\noindent iii) The examples of item ii) also show that if $G$ is a duality group, $\Gamma$ might not be Bredon duality. In Section \ref{dualityext} we construct an example where $G$ is duality over a field $F$ of characteristic $2$ and $|\Gamma:G|=2$ but $\Gamma$ is not Bredon duality over $F$, in contrast to what happens for the properties of being $\UFP_\infty$ or Bredon Poincar\'e duality.
}
\end{examples}

 \begin{example} {\rm Finite index extensions of solvable groups of type $\FP_\infty$ are of type $\UFP_\infty$, are virtually torsion free and have $\underline{\cd}\Gamma=\vcd\Gamma$(\cite{britayo}). Torsion free solvable Poincar\'e duality groups are precisely polycyclic groups (\cite{Bieribook} Theorem 9.23) and torsion free solvable duality groups are precisely those of type $\FP_\infty$ (\cite{GildenhuysStrebel}). So the class of virtually-(torsion-free solvable) groups satisfies all the conditions in Question D (and the class of elementary amenable groups satisfies conditions i), ii) and iii)). }
 \end{example}
 
 \begin{example} {\rm Virtually surface groups and virtually free groups are Bredon Poincar\'e duality and have $\underline{\cd}\Gamma=\vcd\Gamma$.
The last assertion is due to Mislin \cite{mislin}. The first one also follows basically with Mislin's proof. }
\end{example}

 \begin{question}{\rm If $\Gamma$ is virtually torsion free and of type $\UFP_\infty$, does it follows that $\underline{\cd}\Gamma=\vcd\Gamma$?
And what if we add stronger hypothesis, such as $\Gamma$ being Bredon duality, or Bredon Poincar\'e duality?}
 \end{question}

\section{Duality groups in characteristic $p$.}\label{dualityext}

\noindent The aim of this Section is to construct an example to show that the results proven above for Poincar\'e duality groups over $F$ (again, $F$ is a field of characteristic $p$) can not be generalized to duality groups over $F$, thus finishing the proof of Theorem C. We are going to use Corollary 3.5 in \cite{jensenmeier} (or Theorem C in \cite{bradymeier}) which we state below. Recall first that an $n$-dimensional simplicial complex $L$ is called Cohen-Macaulay if for each simplex $\sigma\subset L$ (including the empty one), the reduced homology of the link of $\sigma$ in $L$ is concentrated in dimension $n-\dim\sigma-1$. 

\begin{theorem}[\cite{bradymeier}, \cite{jensenmeier}]\label{cohen} A right-angled Artin group $A_L$ is a duality group if and only if $L$ is Cohen-Macaulay.
\end{theorem}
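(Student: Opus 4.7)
The plan is to compute $\text{H}^*(A_L, \Z A_L)$ explicitly and compare with the Cohen-Macaulay condition on $L$. Let $X$ denote the universal cover of the Salvetti complex of $A_L$; this is a finite-dimensional $\catzero$ cube complex with a free, proper, cocompact $A_L$-action, so it serves as a model for $\text{E}A_L$. Consequently $A_L$ is of type $\FP$, $\cd A_L = \dim L + 1 =: n$, and $\text{H}^*(A_L, \Z A_L) \cong \text{H}^*_c(X; \Z)$. The only remaining conditions for $A_L$ to be a duality group of dimension $n$ are that this cohomology vanish below degree $n$ and be $\Z$-torsion-free in degree $n$.

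The heart of the argument is to establish a direct-sum decomposition
$$\text{H}^k(A_L, \Z A_L) \;\cong\; \bigoplus_{\sigma \in L \cup \{\emptyset\}} \widetilde{\text{H}}^{\,k - \dim\sigma - 2}\bigl(\text{lk}_L(\sigma); \Z\bigr) \otimes_\Z F_\sigma,$$
where each $F_\sigma$ is a free abelian group arising from the cosets of the coordinate subgroup $A_\sigma \leq A_L$ together with a K\"unneth contribution from its toral structure. The route I would take is to stratify $X$ according to $A_L$-orbits of cubes, using the bijection between $k$-cubes of $X$ and pairs $(gA_\sigma, \sigma)$ with $\sigma$ a $(k-1)$-simplex of $L$, and then run a Mayer-Vietoris / dual-cell argument on compactly supported cohomology; the transverse geometry of the orbit attached to $\sigma$ is controlled precisely by $\text{lk}_L(\sigma)$. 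This is essentially the content of the Brady-Meier and Jensen-Meier computations, and it is the right-angled Artin analogue of Davis's formula for right-angled Coxeter groups.

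Given the decomposition, both directions are then easy. If $L$ is Cohen-Macaulay of dimension $n-1$, the definition forces every $\widetilde{\text{H}}^i(\text{lk}_L(\sigma); \Z)$ to be concentrated in degree $i = n - \dim\sigma - 2$ and to be free abelian there; the right-hand side therefore lives in a single degree $k = n$ and is torsion-free, so $A_L$ is duality of dimension $n$. Conversely, if $A_L$ is duality, then the left-hand side vanishes for $k < n$ and is torsion-free for $k = n$; inducting downward on $\dim\sigma$, starting from top-dimensional simplices (for which $\text{lk}_L(\sigma) = \emptyset$ contributes $\widetilde{\text{H}}^{-1} = \Z$ in the correct degree) and ending with $\sigma = \emptyset$ (whose link is $L$ itself), one peels off the summands and forces each link to satisfy the Cohen-Macaulay vanishing. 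The hard part is rigorously setting up the link decomposition formula: one must take care with the indexing of cubes, with the empty-simplex contribution, and with verifying that each $F_\sigma$ is torsion-free; once that is in place, the rest of the argument is formal.
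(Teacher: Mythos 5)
The paper does not prove this statement at all: it is quoted verbatim from \cite{bradymeier} and \cite{jensenmeier}, and the decomposition you propose to establish,
$$\text{H}^{k}(A_L,\Z A_L)\;\cong\;\bigoplus_{\sigma}\widetilde{\text{H}}^{\,k-\dim\sigma-2}(\text{Lk}(\sigma))\otimes F_\sigma ,$$
is precisely the main theorem of \cite{jensenmeier}, which the paper itself invokes later (Example \ref{notCM}). So your route is the one the cited sources take, and your deduction of the equivalence from the formula is essentially right: Cohen--Macaulayness concentrates every summand in degree $k=n=\dim L+1$ and makes it free, and conversely vanishing of the direct sum forces vanishing of each summand (no downward induction on $\dim\sigma$ is needed for this -- a direct sum is zero iff each term is, and each $F_\sigma\neq 0$).

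The genuine gap is that the decomposition formula is the entire mathematical content of the theorem, and you only assert it; ``stratify by orbits of cubes and run Mayer--Vietoris on compactly supported cohomology'' is a plausible plan but is exactly the part that requires work, so as written this is a reduction to the cited result rather than a proof. Two smaller points deserve care if you do carry it out. First, the Cohen--Macaulay condition is stated in terms of reduced \emph{homology} of links while the formula involves reduced \emph{cohomology}; the forward direction is fine because top-degree homology of a finite complex is automatically free, but in the converse direction you must use the freeness of $\text{H}^{n}(A_L,\Z A_L)$ to kill the $\operatorname{Ext}$ terms in universal coefficients before concluding that the link homology (not just cohomology) is concentrated in degree $n-\dim\sigma-2$. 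Second, the empty simplex and the non-maximal-dimensional maximal simplices must be included in the indexing (the latter are what detect failure of purity of $L$); you mention both, but the bookkeeping there is where such arguments typically go wrong.
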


\goodbreak
\begin{example}\label{notCM}{\rm Let $T$ be the 2-dimensional flag complex given by

\bigskip

\begin{tikzpicture}[scale=0.9]

   \draw[black]
    (4,0) -- (4, 2) -- (7,0)--(7,2)--(4,0);
    
      \filldraw (5.5,1) circle (2pt) node[below=3pt]{$u$};
        \filldraw (4,0) circle (2pt) node[below=3pt]{$v_1$};
  \filldraw (4,2) circle (2pt) node[above=3pt]{$w_1$};
  \filldraw (7,0) circle (2pt) node[below=3pt]{$v_2$};
  \filldraw (7,2) circle (2pt) node[above=3pt]{$w_2$};

    \filldraw (1.5,1) circle (0pt) node[below=6pt]{$ $};

   \end{tikzpicture}
   
The reduced homology of the link of the simplex $\{u\}$ vanishes everywhere except at degree 0 where it is $\Z$. Therefore
$T$ is not Cohen-Macaulay.  Moreover, the link of all the other simplices is either contractible or, in the case of the 2-simplices, empty.
By the main theorem of \cite{jensenmeier}, if $\mathcal{X}(T)$ denotes the set of simplices of $T$:
$$\text{H}^*(A_{T},\Z A_T)=\bigoplus_{\sigma\in\mathcal{X}(T)}\Big[\bigoplus_{i=1}^\infty\overline{\text{H}}^{*-\dim\sigma-2}(\text{Lk}(\sigma))\Big]$$
Therefore:
$$\text{H}^2(A_{T},\Z A_T)=\bigoplus_{i=1}^\infty\Z\text{ and }$$
$$\text{H}^3(A_{T},\Z A_T)=\bigoplus_{2}\Big[\bigoplus_{i=1}^\infty\Z\Big].$$

By the universal coefficient Theorem for groups of type $\FP_\infty$ (\cite{Bieribook} Corollary 3.6), as the cohomology groups above are all torsion free we have
$$\text{H}^i(A_T,\Z A_T)\otimes_\Z F=\text{H}^i(A_T,FA_T)$$
thus $A_T$ is neither a duality group nor an $F$-duality group. }
\end{example}

\begin{example}\label{long}{\rm 

Let $W_n$ denote the Whitehead poset  (see \cite{bradymmm}  Definition 4.2).  
By Theorem 5.13 in \cite{bradymmm}, $W_n$ is Cohen-Macaulay.
Consider the case when $n=4$  and let the symmetric group on 4 letters, $S_4$, act on the poset $W_4$ by permuting the labeling. Let us denote by $L$ the simplicial complex associated to $W_4$, observe that $L$ is a flag complex with an admissible simplicial action of $S_4$. Let $K=\langle (1,2)\rangle\leq S_4$ be the transposition of $1$ and $2$ and put $\Gamma:=K\ltimes A_L$ with the obvious induced action of $K$ on $A_L$. By Theorem \ref{cohen}, $\Gamma$ is a virtual duality group, thus it is also $F$-duality for $F$ the field of two elements. One easily checks that the fixed points subcomplex $L^K$ is precisely the complex $T$ of Example \ref{notCM}.  
 Since $A_T=C_{A_L}(K)$ we deduce that $\Gamma$ is neither Bredon duality, nor $F$-Bredon duality.

}
\end{example}

The following construction is due to the referee:

\begin{example}\label{simpler}{\rm Consider any triangulation $S$ of the 2-sphere having at least one vertex, say $v$, of degree at least 6. Observe that there is a full subcomplex of $S$ isomorphic to the complex $T$ of Example \ref{notCM}. Let $L$ be union of two copies of $S$ glued along $T$.  
We claim that the complex $L$ is Cohen-Macaulay. To see observe that:
\begin{itemize} 
\item The link of any 0-simplex $\sigma$ has the homotopy type of  either a 1-sphere, a wedge sum of two 1-spheres (if $\sigma$ lies in $T\setminus\{v\}$) or a wedge sum of three 1-spheres (if $\sigma=\{v\}$).

\item The link of any 1-simplex $\sigma$ is either a set of two or a set of three points (if $\sigma$ lies in $T$).
\end{itemize}
Therefore for any simplex $\sigma$ the reduced homology of $\text{Lk}(\sigma)$ is concentrated at degree $2-\text{dim} \sigma-1$ so the claim follows. 
Now, let $K=C_2$ act in $L$ by swapping both copies of $S$. Obviously $L^K=T$. As before, $\Gamma$ is a virtual duality group but $\Gamma=K\ltimes A_L$ is neither Bredon duality, nor $F$-Bredon duality.

}\end{example}

\section{Acknowledgments}

This work was partially supported by Ministerio de Economia y Competitividad,  Secretar\'ia de Estado de Investigaci\'on, Desarrollo e Innovaci\'on BFM2010-19938-C03-03,  Grupo de Investigaci\'on Consolidado en \'Algebra del Gobierno de Arag\'on and  European Regional Development Funds. I would like to thank the anonymous referee for pointing out both a mistake in Example \ref{long} and the construction of  Example \ref{simpler}.

\bibliographystyle{abbrv}

\end{document}